\numberwithin{equation}{section}
\newtheorem{lemma}[equation]{Lemma}     
\newtheorem{corollary}[equation]{Corollary}
\newtheorem{proposition}[equation]{Proposition}
\newcommand{\psmod}[1]{~(\textup{\text{mod}}~{#1})}
\DeclareMathOperator{\Frob}{Frob}
\DeclareMathOperator{\GL}{GL}
\DeclareMathOperator{\opchar}{char}
\DeclareMathOperator{\lm}{lm}
\DeclareMathOperator{\ord}{ord}
\DeclareMathOperator{\opspan}{span}
\DeclareMathOperator{\supp}{supp}
\DeclareMathOperator{\Tr}{Tr}
\DeclareMathOperator{\vol}{vol}
\DeclareMathOperator{\Vol}{Vol}
\newcommand{\A}{\mathbb A}
\newcommand{\C}{\mathbb C}
\newcommand{\F}{\mathbb F}
\newcommand{\G}{\mathbb G}
\newcommand{\PP}{\mathbb P}
\newcommand{\NN}{\mathbb N}
\newcommand{\Q}{\mathbb Q}
\newcommand{\R}{\mathbb R}
\newcommand{\Z}{\mathbb Z}
\newcommand{\frakp}{\mathfrak p}
\newcommand{\omegadot}{{}^{\bullet}}
\newcommand{\la}{\langle}
\newcommand{\ra}{\rangle}
\newcommand{\fbar}{\overline{f}}
\newcommand{\xbar}{\overline{x}}
\newcommand{\ybar}{\overline{y}}
\newcommand{\wbar}{\overline{w}}
\newcommand{\Vbar}{\overline{V}}
\title[Computing zeta functions]{Computing zeta functions of nondegenerate hypersurfaces \\ with few monomials}
\author{Steven Sperber and John Voight}
\begin{document}
\maketitle

\begin{abstract}
Using the cohomology theory of Dwork, as developed by Adolphson and Sperber, we exhibit a deterministic algorithm to compute the zeta function of a nondegenerate hypersurface defined over a finite field.  This algorithm is particularly well-suited to work with polynomials in small characteristic that have few monomials (relative to their dimension).  Our method covers toric, affine, and projective hypersurfaces and also can be used to compute the $L$-function of an exponential sum.
\end{abstract}

Let $p$ be prime and let $\F_q$ be a finite field with $q=p^a$ elements.  Let $\Vbar$ be a variety defined over $\F_q$, described by the vanishing of a finite set of polynomial equations with coefficients in $\F_q$.  We encode the number of points $\#\Vbar(\F_{q^r})$ on $\Vbar$ over the extensions $\F_{q^r}$ of $\F_q$ in an exponential generating series, called the \emph{zeta function} of $\Vbar$:
\[ Z(\Vbar,T)=\exp\left(\sum_{r=1}^{\infty} \#\Vbar(\F_{q^r})\frac{T^r}{r}\right) \in 1+T\Z[[T]]. \]
The zeta function $Z(\Vbar,T)$ is a rational function in $T$, a fact first proved using $p$-adic methods by Dwork \cite{Dwork1,Dwork2}.  The algorithmic problem of computing $Z(\Vbar,T)$ efficiently is of significant foundational interest, owing to many practical and theoretical applications (see e.g.\ Wan \cite{Wan1} for a discussion).  

From a modern point of view, we consider $Z(\Vbar,T)$ cohomologically: we build a $p$-adic cohomology theory that functorially associates to $\Vbar$ certain vector spaces $H^i$ over a $p$-adic field $K$, each equipped with a (semi-)linear operator $\Frob_i$, such that $Z(\Vbar,T)$ is given by an alternating product of the characteristic polynomials of $\Frob_i$ acting on the spaces $H^i$.  The theory of $\ell$-adic \'etale cohomology, for example, was used by Deligne to show that $Z(\Vbar,T)$ satisfies a Riemann hypothesis when $\Vbar$ is smooth and projective.  Parallel developments have followed in the $p$-adic (de Rham) framework, including the theories of Monsky-Washnitzer, crystalline, and rigid cohomology (see Kedlaya \cite{Kedlayaoverview} for an introduction).  In this paper, for a toric hypersurface $\Vbar$ defined by a (nondegenerate) Laurent polynomial $\fbar$ in $n$ variables over $\F_q$, we employ the cohomology theory of Dwork, working with a space $H^{n+1}(\Omega\omegadot)$ obtained as the quotient of a $p$-adic power series ring over $K$ in $n + 1$ variables by the subspace generated by the images of $n + 1$ differential operators.

Efforts to make these cohomology theories computationally effective have been extensive.  Schoof's algorithm for counting points on an elliptic curve \cite{Schoof} (generalized by Edixhoven and his coauthors \cite{Edixhovenproject} to compute coefficients of modular forms) can be viewed in this light, using the theory of mod $\ell$ \'etale cohomology.  A number of results on the $p$-adic side have also emerged in recent years.  In early work, Wan \cite{Wan2} and Lauder and Wan \cite{LauderWan} demonstrated that the $p$-adic methods of Dwork can be used to efficiently compute zeta functions in small (fixed) characteristic.  Lauder and Wan use the Dwork trace formula and calculate the trace of Frobenius acting on a $p$-adic Banach space, following the original method of Dwork and working on the ``chain level''.  In this paper, we instead work with the extension of Dwork's theory due to Adolphson and Sperber \cite{ASExpsums}; this point of view was also pursued computationally by Lauder and Wan in the special case of Artin-Schreier curves \cite{LauderWanArtinSchreier,LauderWanArtinSchreierII}.  Under the hypothesis that the Laurent polynomial $\fbar$ is \emph{nondegenerate} (see below for the precise definition), the zeta function can be recovered from the action of Frobenius on a certain single cohomology space $H^{n+1}(\Omega)$.  This method works with exponential sums and so extends naturally to the case of toric, affine, or projective hypersurfaces \cite{ASother}.  (It suffices to consider the case of hypersurfaces to compute the zeta function of any variety defined over a finite field using inclusion-exclusion or the Cayley trick.)

The method of Dwork takes into account the terms that actually occur in the Laurent polynomial $\fbar$; these methods are especially well-suited when the monomial support of $\fbar$ is small, so that certain combinatorial aspects are simple.  This condition that $\fbar$ have few monomials in its support, in which case we say (loosely) that $\fbar$ is \emph{fewnomial} (a term coined by Kouchnirenko \cite{Kouchfew}), is a natural one to consider.  For example, many explicit families of hypersurfaces of interest, including the well-studied (projective) Dwork family $x_0^{n+1}+\dots+x_n^{n+1}+\lambda x_0x_1\cdots x_n=0$ of Calabi-Yau hypersurfaces \cite{Dwork3} (as well as more general monomial deformations of Fermat hypersurfaces \cite{DworkPAA}) can be written with few monomials.  In cryptographic applications, the condition of fewnomialness also often arises.  Finally, the running time of algorithms on fewnomial input are interesting to study from the point of view of complexity theory: see, for example, work of Bates, Bihan, and Sottile \cite{BBS}.  

To introduce our result precisely, we now set some notation.  Let $\Vbar$ be a toric hypersurface, the closed subset of $\G_m^n$ defined by the vanishing of a Laurent polynomial
\[ \fbar=\sum_{\nu \in \Z^n} \overline{a}_\nu x^\nu \in \F_q[x^{\pm}]=\F_q[x_1^{\pm},\dots,x_n^{\pm}]. \] 
We use multi-index notation, so $x^\nu=x_1^{\nu_1}\cdots x_n^{\nu_n}$.  We sometimes write $Z(\fbar,T)=Z(\Vbar,T)$.  Let $\Delta=\Delta(\fbar)$ be the \emph{Newton polytope} of $\fbar$, the convex hull of its \emph{support} 
\[ \supp(\fbar)=\{\nu \in \Z^n:\overline{a}_\nu \neq 0\} \]  
in $\R^n$.  For simplicity, we assume throughout that $\dim(\Delta)=n$.  For a face $\tau \subseteq \Delta$, let $\fbar|_\tau=\sum_{\nu \in \tau} \overline{a}_\nu x^\nu$.  Then we say $\fbar$ is \emph{($\Delta$-)nondegenerate} if for all faces $\tau \subseteq \Delta$ (including $\Delta$ itself), the system of equations
\[ \fbar|_\tau = x_1\frac{\partial \fbar|_\tau}{\partial x_1} = \cdots = x_n\frac{\partial \fbar|_\tau}{\partial x_n} = 0 \]
has no solution in $\overline{\F}_q^{\times n}$, where $\overline{\F}_q$ is an algebraic closure of $\F_q$.  The set of $\Delta$-nondegenerate polynomials with respect to a polytope $\Delta$ forms an open subset in the affine space parameterizing their coefficients $(\overline{a}_\nu)_{\nu \in \Delta \cap \Z^n}$: under mild hypothesis, such as when $\Delta$ contains a unimodular simplex, then this subset is Zariski dense.  (See Batyrev and Cox \cite{BatyrevCox} as a reference for this notion as well as the work of Castryck and the second author \cite{CV} for a detailed analysis of nondegenerate curves.)  We distinguish here between $\Delta(\fbar)$ and $\Delta_\infty(\fbar)$ which is the convex closure of $\Delta(\fbar) \cup \{0\}$: for the Laurent polynomial $w\fbar$ in $n+1$ variables, $\fbar$ is $\Delta$-nondegenerate if and only if $w\fbar$ is nondegenerate with respect to $\Delta_\infty(\fbar)$ in the sense of Kouchnirenko \cite{Kouchnirenko}, Adophson and Sperber \cite{ASExpsums}, and others.  Nondegenerate hypersurfaces are an attractive class to consider because many of their geometric properties can be deduced from the combinatorics of their Newton polytopes.

Let $s=\#\supp(\fbar)$ and let $U$ be the $(n+1) \times s$-matrix with entries in $\Z$ whose columns are the vectors $(1,\nu) \in \Z^{n+1}$ for $\nu \in \supp(\fbar)$.  Let $\rho$ be the rank of $U$ modulo $p$.  Let $v=\Vol(\Delta)=n!\vol(\Delta)$ be the normalized volume of $\Delta$, so that a unit hypercube $[0,1]^n$ has normalized volume $n!$ and the unit simplex $\sigma=\{(a_1,\dots,a_n) \in \R_{\geq 0}^n : \sum_i a_i \leq 1\}$ has normalized volume $1$.

We say that $\Delta$ is \emph{confined} if $\Delta$ is contained in an orthotope (box) with side lengths $b_1,\dots,b_n$ with $b_1 \cdots b_n \leq n^n v$.  We say that $\fbar$ is \emph{confined} if $\Delta(\fbar)$ is confined.  A slight extension of a theorem of Lagarias and Ziegler \cite{LagariasZiegler} shows that every polytope $\Delta$ is $\GL_n(\Z)$-equivalent to a confined polytope; this existence can also be made effective.  (See section 3 for more detail.)  In other words, for each Laurent polynomial $\fbar$ there is a computable monomial change of basis of $\F_q[x^{\pm}]$, giving rise to an equality of zeta functions, under which $\fbar$ is confined.  (In the theorem below, at the expense of introducing a factor of $\log \delta$, where $\delta=\delta(S)=\max_{\nu \in S} |\nu|$ where $|\nu|=\max_i |\nu_i|$, one can remove the assumption that $\Delta$ is confined.)

For functions $f,g:\Z_{\geq 0}^m \to \R_{\geq 0}$, we say that $f=O(g)$ if there exists $c \in \R_{>0}$ and $N \in \Z_{\geq 0}$ such that for every $x=(x_1,\dots,x_m) \in \Z_{\geq N}^m$ we have $g(x) \leq c f(x)$.  (The reader is warned that not all properties familiar to big-Oh notation for functions of one variable extend to the multivariable case; see Howell \cite{Howell}.  In fact, our analysis also holds with Howell's more restrictive definition, but we will not pursue this further here.)  We further use the ``soft-Oh'' notation, where $f=\widetilde{O}(g)$ if $f=O(g \log^k g)$ for some $k \geq 1$.

Our main result is as follows.

\begin{theorema}
Let $n \in \Z_{\geq 1}$.  Then there exists an explicit algorithm that, on input a nondegenerate Laurent polynomial $\fbar \in \F_q[x_1^{\pm},\dots,x_n^{\pm}]$ with $p \geq 3$ and an integer $N \geq 1$, computes as output $Z(\fbar,T)$ modulo $p^N$.  If further $\fbar$ is confined, then this algorithm uses
\[ \widetilde{O}\bigl(s^{\lceil n/2 \rceil} + p N^3 \log q + p^{s-\rho}(6N+n)^s (v^4 N \log^2 q)\bigr) \] 
bit operations.
\end{theorema}

To recover the zeta function (as an element of $\Q(T)$), if we fix both the dimension and the number $s$ of monomials, we have the following result.

\begin{theoremb}
Let $n \in \Z_{\geq 1}$ and $s \in \Z_{>n}$.  Then there exists an explicit algorithm that, on input a confined, nondegenerate Laurent polynomial $\fbar \in \F_q[x_1^{\pm},\dots,x_n^{\pm}]$ with $p \geq 5$ and $s=\#\supp(\fbar)$, computes as output $Z(\fbar,T)$ using
\[ \widetilde{O}(p^{\min(1,s-\rho)} v^{s+5} \log^{s+3} q)  \] 
bit operations.
\end{theoremb}

According to a theorem of Adolphson and Sperber \cite{ASExpsums}, under the hypothesis that $\fbar$ is nondegenerate, $Z(\fbar,qT)^{(-1)^n}$ is a polynomial of degree $v$ times $Z(\G_m^n,T)^{(-1)^n}$.  Therefore, in the context of Theorem B, if $p=O(v \log q)$ is small (or fixed), then our algorithm runs in polynomial time in the (dense) output size, which is the best one could hope for (aside from minimizing the degree of this polynomial).  (The fewnomial input size, on the other hand, is $O(s \log v \log q)$ for $\fbar$ confined and $n$ fixed.)

In our theorem, we require the dimension $n$ to be fixed for several reasons.  First, we employ well-known algorithms for lattice polytopes which have only been analyzed assuming that the dimension is fixed.  (They further assume that arithmetic operations in $\Z$ take time $O(1)$, which is nearly valid in the usual bit-complexity model if $\Delta$ is confined and $n$ is fixed; for a discussion of this point, see Section 3.)  Second, it is often quite natural from a geometric point of view to consider the dimension to be fixed; one often considers families of hypersurfaces of a fixed dimension, for example.  Finally, allowing fewnomial input and output and varying dimension, the problem of computing $Z(\fbar,T)$ is harder than the NP-complete problem 3-\textsf{SAT} (indeed, for the latter one only wishes to know if $\#X(\F_2) > 0$ for an affine hypersurface $X$ of degree $2$).  For these reasons, we restrict our analysis (continuing below) to fixed dimension.

Our method follows in the same vein as other recently introduced $p$-adic cohomological techniques.  The methods of Lauder and Wan \cite{LauderWan} mentioned above compute $Z(\fbar,T)$ for a polynomial $\fbar$ of total degree $d$ using $\widetilde{O}(p^{2n+4}d^{3n^2+9n}\log^{3n+7} q)=\widetilde{O}((p v \log q)^{3n+9})$ bit operations.  The dense input size of $\fbar$ is $O((d+1)^n \log q)$; consequently if the prime $p$ (and dimension $n$) are fixed then their algorithm runs in polynomial time in the dense input size.  Their method, although apparently not practical, is completely general and does not require any hypothesis on $\fbar$.  Our method can be analyzed on dense input (see Section 5) as well, running in time $\widetilde{O}(p^{2n}v^{2n+4}\log^{2n+2})$ with no condition on the number of monomials in the support of $\fbar$.  

In a different direction, Kedlaya \cite{Kedlaya} (see also the presentation by Edixhoven \cite{Edixhoven}) used Monsky-Washnitzer cohomology to compute the zeta function of a hyperelliptic curve of genus $g$ over $\F_q$ in time $\widetilde{O}(pg^4 \log^3 q)$.  (Note here that the dense input size is $O(g\log q)$.) This idea has been taken up by several others: see, for example, work of Abbott, Kedlaya, and Roe \cite{akr}, who compute the zeta function of a projective hypersurface by working in the complement of the hypersurface and using Mumford reduction.  (Indeed, Kedlaya has suggested that there should be a natural extension \cite{Kedlayapreprint} of his  ideas to the realm of toric hypersurfaces.)  Our method also mirrors the algorithm of Castryck, Denef, and Vercauteren \cite{cdv}, who tackle the case of nondegenerate curves.  Their method has good asymptotic behavior but to be practical needs an optimized implementation \cite[\S 1.2.4]{castryckthesis}. However, rather than following this vein and working with Monsky-Washnitzer ($p$-adic de Rham) cohomology, we employ the cohomology theory of Dwork, which has a more combinatorial flavor.

In a yet further direction, Lauder has used Dwork's theory of $p$-adic differential equations to compute zeta functions using deformation \cite{LauderDeformation} and recursion \cite{LauderRecursion}.  The Frobenius, acting on the members of a one-parameter family, satisfies a differential equation coming from the Gauss-Manin connection.  Lauder uses this equation to solve for the action given an initial condition arising from the action on the cohomology of a simple variety which one can compute directly.  Our method fits into this framework as it provides natural base varieties to deform from: indeed, the idea of deformation in the context of nondegenerate curves has been pursued by Tuitman \cite{Tuitman}.  The methods of Lauder show that one can compute $Z(\Vbar,T)$ for a smooth projective hypersurface $\Vbar \subseteq \PP^n$ of degree $d$ with $p \nmid d$ and nonvanishing diagonal terms in time $p^2(d^n \log q)^{O(1)}$.  The deformation method has also been pursued fruitfully by Gerkmann \cite{Gerkmann} and others in different contexts.

Adapting an idea of Chudnovsky and Chudnovsky and Bostan, Gaudry, and Schost \cite{BGS} for accelerated reduction, Harvey \cite{Harvey} has improved Kedlaya's method for hyperelliptic curves, with a runtime of $p^{1/2}(g \log q)^{O(1)}$.  This approach appears to extend to higher dimensions as well, extending the method of Abbott, Kedlaya, and Roe \cite{Harveypreprint}: his method appears to give a runtime of $p^{1/2} (d^n \log q)^{n+O(1)}$ under a smoothness hypothesis analogous to the condition of nondegeneracy (but somewhat weaker).  It would be interesting to see how his ideas for lowering the exponent on $p$ might apply in our situation.

This paper is organized as follows.  In section 1, we introduce the cohomology theory of Dwork and give an overview of our method.  In section 2, we discuss each step of the algorithm in turn: computing the splitting function and the Jacobian ring, the computation of Frobenius (where the condition of sparsity enters), and the reduction theory in cohomology.  In section 3, we give some algorithms for computing with polytopes.  We then discuss running time and precision estimates for the complete algorithm in section 4.  Finally, in section 5 we discuss the case $p=2$ and consider some other possible modifications.  We conclude in section 6 with some examples.

\section{Overview}

In this section, we give an overview of our algorithm.  Our introduction will be concise; for a more complete treatment of the theory of Dwork \cite{Dwork2}, see Koblitz \cite{Koblitzbook}, Lauder and Wan \cite{LauderWan}, and Monsky \cite{Monsky}.

In this section, we assume $p>2$; see section 5 for a discussion of the case $p=2$.

\subsection*{Exponential sums}

Let $\fbar \in \F_q[x_1^{\pm},\dots,x_n^{\pm}]$ be a Laurent polynomial and let $\Vbar \subseteq (\G_m)_{\F_q}^n$ be the toric hypersurface defined by the vanishing of $\fbar$.  Let $\Theta:\F_q \to C$ be a nontrivial additive character (with $C$ a commutative ring of characteristic zero), so that 
\[ \Theta(\xbar+\ybar)=\Theta(\xbar)\Theta(\ybar) \] 
for all $\xbar,\ybar \in \F_q$.  A point of departure for the theory of Dwork is the observation that for $\xbar \in \F_q^{\times n}$, we have 
\[ \sum_{\wbar \in \F_q} \Theta(\wbar\fbar(\xbar)) = \begin{cases} q, &\text{ if $\fbar(\xbar)=0$}; \\ 0, &\text{ otherwise}. \end{cases} \]
Consequently
\begin{equation} \label{qXFq}
q \#\Vbar(\F_q) = \sum_{\substack{\wbar \in \F_q \\ \xbar \in \F_q^{\times n}}} \Theta(\wbar \fbar(\xbar))
= \sum_{(\wbar,\xbar) \in \F_q^{\times (n+1)}} \Theta(\wbar \fbar(\xbar)) + (q-1)^n.
\end{equation}
In other words, counting the set of points $\Vbar(\F_q)$ can be achieved by instead evaluating an exponential sum (on either $\A^1 \times \G_m^{n}$ or $\G_m^{n+1}$).

For $r \in \Z_{\geq 1}$, we define a system of nontrivial additive characters $\Theta_r:\F_{q^r} \to C$ by $\Theta_r=\Theta \circ \Tr_r$ where $\Tr_r:\F_{q^r} \to \F_q$ is the trace map, and we define the exponential sums
\[ S_r(w\fbar, \G_m^{n+1}) = \sum_{(\wbar,\xbar) \in \F_q^{\times (n+1)}} \Theta_r(\wbar\fbar(\xbar)). \]
The $L$-function associated to $w \fbar$ over $\G_m^{n+1}$ is defined to be
\[ L(w\fbar,\G_m^{n+1},T)=\exp\left(\sum_{r=1}^{\infty} \frac{S_r(w\fbar, \G_m^{n+1})}{r} T^r\right). \]
Then by (\ref{qXFq}) we have
\begin{equation} \label{ZetaLZeta}
 Z(\Vbar,qT) = L(w\fbar,\G_m^{n+1},T)Z((\G_m^n)_{\F_q},T), \end{equation}
where
\begin{equation} \label{LbyZeta}
 Z((\G_m^n)_{\F_q},T)^{(-1)^{n+1}} = \prod_{i=0}^{n} (1-q^i T)^{\binom{n}{i} (-1)^i}.
\end{equation}

\subsection*{The Dwork splitting function and interpolation}

Complex characters are defined via the exponential map, but the theory takes off when the ring $C$ where the character takes values is a $p$-adic ring, and the exponential function does not have a large enough $p$-adic radius of convergence to be useful.  We improve this radius of convergence by using a modified exponential function as follows.  Let $\pi$ be an element of the algebraic closure of $\Q_p$ that satisfies $\pi^{p-1}=-p$; then $\Z_p[\pi]=\Z_p[\zeta_p]$ where $\zeta_p$ is a primitive $p$th root of unity.  We define the function
\[ \theta(t) = \exp\left(\pi t+\frac{(\pi t)^p}{p}\right) = \exp(\pi(t-t^p)) = \sum_{i=0}^{\infty} \lambda_i t^i \in \Q_p[\pi][[t]] \]
which is called a \emph{Dwork splitting function}.  It is sometimes denoted by $\theta_1(t)$ to distinguish it from other splitting functions.  We have 
\begin{equation} \label{lambdaest}
\ord_p \lambda_i \geq i (p-1)/p^2,
\end{equation}
where $\ord_p$ is the $p$-adic valuation normalized so that $\ord_p p = 1$; thus, in fact $\theta(t) \in \Z_p[\pi][[t]]$.  We observe that $\theta(1)=1+\pi+O(\pi^2)$ is a primitive $p$th root of unity, and so we obtain our additive characters via the maps 
\begin{align*}
\Theta_r:\F_{q^r} &\to \Z_p[\pi] \\
\Theta_r(\overline{x})&= \theta(1)^{(\Tr \circ \Tr_r)(\overline{x})}
\end{align*}
where $\Tr \circ \Tr_r = \Tr_{\F_{q^r}/\F_p}$ is the absolute trace.

The values of the characters $\Theta_r$ can be $p$-adically interpolated in a way consistent with field extensions, as follows.  Let $\Q_q$ be the unramified extension of $\Q_p$ of degree $a=\log_p q$, and let $\Z_q \subseteq \Q_q$ denote its ring of integers, so that $\Z_q$ is the Witt vectors over $\F_q$.  Let $\sigma:\Z_q \to \Z_q$ denote the $p$-power Frobenius (lifting the $p$th power map on the residue field $\F_q$.)  There is a canonical character $\omega$, called the Teichm\"uller character, of the multiplicative group $\F_q^\times$ taking values in $\Z_q$ that takes an element $\overline{x} \in \F_q^\times$ to the element $x \in \Z_q$, satisfying $x^q=x$ and such that $x$ reduces to $\overline{x}$ in $\F_q$.  For such a Teichm\"uller representative $x \in \Z_{q}$ lifting $\xbar$, we find that
\[ \Theta_1(\overline{x})=\prod_{i=0}^{a-1} \theta(x^{p^i})=\theta(x) \theta(x^p) \cdots \theta(x^{q/p}) \in \Z_p[\pi]. \]
and extending this for $r \in \Z_{\geq 1}$ we have
\[ \Theta_r(\overline{x})=\prod_{i=0}^{ar-1} \theta(x^{p^i}) \in \Z_p[\pi] \]
for $\xbar \in \F_{q^r}$ and $x$ a Teichm\"uller lift of $\xbar$.  
Let $\sigma:\Z_q \to \Z_q$ by $x \mapsto x^{\sigma}$ denote the $p$-power Frobenius, the ring automorphism of $\Z_q$ that reduces to the map $\xbar \mapsto \xbar^p$ modulo $p$.  Then 
\begin{equation} \label{psiranal}
\Theta_r(\overline{x}) = \prod_{i=0}^{ar-1} \theta(x^{\sigma^i}).
\end{equation}

We now consider these character values applied to values of our Laurent polynomial $\fbar$.  Write $\fbar(x)=\sum_{\nu} \overline{a}_\nu x^\nu \in \F_q[x^{\pm}]$ in multi-index notation; we assume that each $\overline{a}_{\nu} \neq 0$.  Let $f(x) = \sum_{\nu} a_\nu x^\nu \in \Z_q[x^{\pm}]$, where $a_\nu$ is the Teichm\"uller lift of $\overline{a}_\nu$.  In light of (\ref{psiranal}), we are led to consider the power series
\[ F(w,x)   = \prod_{\nu} \theta(w a_{\nu} x^\nu) \in \Z_q[\pi][[w,x^{\pm}]] \]
and
\[ F^{(a)}(w,x) = \prod_{i=0}^{a-1} F^{\sigma^i}(w^{p^i},x^{p^i}) \in \Z_q[\pi][[w,x^{\pm}]] \]
where $F^{\sigma}$ denotes the power series obtained by applying $\sigma$ to the coefficients of $F$.  (The abuse of notation which identifies a power series and its specializations will only occur in this paragraph.)
It then follows from (\ref{psiranal}) and a straightforward calculation that
\begin{equation} \label{padicinterpFa}
\Theta_r(\wbar\fbar(\xbar))=F^{(a)}(w,x) F^{(a)}(w^q,x^q) \cdots F^{(a)}(w^{q^{r-1}},x^{q^{r-1}}) \in \Z_p[\pi] 
\end{equation}
for all $(\wbar,\xbar) \in \F_{q^r} \times \F_{q^r}^{\times n}$, where $(w,x)$ denotes the Teichm\"uller lift.  We have thereby extended the interpolation of the values of $\fbar$ to the power series (\ref{padicinterpFa}).

\subsection*{Dwork trace formula}

So far, we have related the zeta function to the $L$-function of an exponential sum via a $p$-adic additive character arising from the Dwork splitting function, and we have interpolated these character values in a power series $F=F(w,x)$.  In order to move this to cohomology, we define a space of $p$-adic analytic functions like $F$ with similar support and $p$-adic growth.

Let $\Delta=\Delta(\fbar)$ be the \emph{Newton polytope} of $\fbar$, the convex hull of its \emph{support} 
\[ \supp(\fbar)=\{\nu \in \Z^n:\overline{a}_\nu \neq 0\}. \]  
For $d \in \R_{\geq 0}$, let $d \Delta=\{dz \in \R^n : z \in \Delta\}$ denote the $d$th dilation of $\Delta$.  Let $L_\Delta$ be the ring
\begin{equation} \label{definitionofL}
L_\Delta=\left\{ \sum_{d=0}^{\infty}\   \sum_{\nu \in d\Delta \cap \Z^n}   c_{d,\nu} w^d x^{\nu} : \text{$c_{d,\nu} \in \Z_q[\pi]$ and $\ord_p(c_{d,\nu}) \geq d\frac{p-1}{p^2}$} \right\}.
\end{equation}
The estimate (\ref{lambdaest}) implies that $F \in L_\Delta$, and so multiplication by $F$ defines a linear operator which we also denote $F:L_\Delta \to L_\Delta$.

On the space $L_\Delta$, we have a ``left inverse of Frobenius'' $\psi:L_\Delta \to L_\Delta$ defined by  
\[ \psi(c_{d,\nu} w^d x^\nu)  =
\begin{cases}
\sigma^{-1}(c_{d,\nu}) w^{d/p} x^{\nu/p}, & \text{ if $p \mid d$ and $p \mid \nu$}, \\
0, & \text{ otherwise};
\end{cases} \]
in multi-index notation, the condition $p \mid \nu$ means $p \mid \nu_i$ for all $i=1,\dots,n$.  The map $\psi$ is $\sigma^{-1}$-semi-linear as a map of free $\Z_q$-modules.

Finally, let $\alpha=\psi \circ F$ and $\alpha_a=\psi^a \circ F^{(a)}$.  Then $\alpha_a$ is $\Z_q$-linear, and another calculation reveals in fact that $\alpha_a=\alpha^a$ (composition $a$ times).

The \emph{Dwork trace formula} \cite{Dwork1} then asserts that 
\[ S_r(w\fbar,\G_m^{n+1})=(q^r-1)^{n+1}\Tr(\alpha_a^r). \]
It follows \cite{ASExpsums} that
\begin{equation} \label{Lchain}
L(w\fbar,\G_m^{n+1}, T)^{(-1)^n} 
= \prod_{j=0}^{n+1} \det(1- (q^j T)\alpha_a^j  \mid L_\Delta)^{(-1)^{j}\binom{n+1}{j}}.
\end{equation}
The equality (\ref{Lchain}) expresses $L(w\fbar,\G_m^{n+1},T)$ via the action of $\alpha_a$ and its powers on an (infinite-dimensional) $p$-adic Banach space; this is the point of departure for Lauder and Wan in their work \cite{LauderWan}.  

We now proceed one step further and move to the level of cohomology.

\subsection*{Dwork cohomology}

We now consider a Koszul complex $\Omega^{\omegadot}$ associated to $wf$ as follows.  To ease notation in this subsection, let $x_0=w$.  
For $i=0,\dots,n$, let 
\begin{equation} \label{fi}
f_i = x_i \frac{\partial (x_0 f)}{\partial x_i}
\end{equation}
and define the operator $D_i:L_\Delta \to L_\Delta$ by
\[ D_i=x_i\frac{\partial}{\partial x_i} + \pi f_i \]
(the latter is the operator given by multiplication by $\pi f_i$).  The operators $D_i$ commute.  For $k=0,\dots,n$, let 
\begin{equation} \label{Omegak}
 \Omega^k=\bigoplus_{0 \leq j_1 < \dots < j_k \leq n} L_\Delta \left( \frac{dx_{j_1}}{x_{j_1}} \wedge \dots \wedge \frac{dx_{j_k}}{x_{j_k}} \right) \cong L_\Delta^{\binom{n+1}{k}}. 
\end{equation}
Let $\Omega^{\omegadot}$ be the complex
\[ 0 \to \Omega^0 \to \Omega^1 \to \dots \to \Omega^{n+1} \to 0 \]
with maps
\[ \nabla\left( \xi \frac{dx_{j_1}}{x_{j_1}} \wedge \dots \wedge \frac{dx_{j_k}}{x_{j_k}} \right) = 
\sum_{i=0}^n (D_i \xi) \frac{dx_{j_i}}{x_{j_i}} \wedge \frac{dx_{j_1}}{x_{j_1}} \wedge \dots \wedge \frac{dx_{j_k}}{x_{j_k}} \]
for $\xi \in L_\Delta$.  

Now $\alpha$ induces a map on the complex $\Omega^{\omegadot}$:
\[
\xymatrix{
0 \ar[r] & \Omega^0 \ar[r] \ar[d]^{p^{n+1}\alpha} & \Omega^1 \ar[r] \ar[d]^{p^{n}\alpha} & \dots \ar[r] & \Omega^{n+1} \ar[r] \ar[d]^{\alpha} & 0 \\
0 \ar[r] & \Omega^0 \ar[r] & \Omega^1 \ar[r] & \dots \ar[r] & \Omega^{n+1} \ar[r] & 0
} \]
since one checks that $\alpha D_i = p D_i \alpha$ for all $i$.  (One similarly has a map induced by $\alpha_a$, replacing $p$ by $q$.)

Then \cite{ASExpsums} we have
\begin{equation} \label{Lcohom}
 L(w\fbar,\G_m^{n+1}, T)^{(-1)^n} = \prod_{j=0}^{n+1} \det(1- \alpha_a T  \mid H^j(\Omega^{\omegadot}))^{(-1)^{n+1-j}}.
 \end{equation}
The condition that $\fbar$ is nondegenerate simplifies the expression (\ref{Lcohom}), as we now see.

\subsection*{Nondegenerate}

We recall our notation from the introduction.  For a face $\tau \subseteq \Delta$, let $\fbar|_\tau=\sum_{\nu \in \tau} \overline{a}_\nu x^\nu$.  Then we say $\fbar$ is \emph{nondegenerate} if for all faces $\tau \subseteq \Delta$ (of any dimension, including $\Delta$ itself), the system of equations
\[ \fbar|_\tau = x_1\frac{\partial \fbar|_\tau}{\partial x_1} = \cdots = x_n\frac{\partial \fbar|_\tau}{\partial x_n} = 0 \]
has no solution in $\overline{\F}_q^{\times n}$, where $\overline{\F}_q$ is an algebraic closure of $\F_q$.

Suppose $\fbar$ is nondegenerate.  Then all the cohomology spaces $H^k(\Omega^{\omegadot})$ are trivial except for $k=n+1$.  Let 
\[ B=\frac{L_\Delta}{D_0 L_\Delta + D_1 L_\Delta + \dots + D_n L_\Delta} \cong H^{n+1}(\Omega^{\omegadot}). \]   
Then by work of Adolphson and Sperber \cite{ASExpsums}, the $\Z_q[\pi]$-module $B$ is free of dimension $n!\vol(\Delta)=v$ (equal to the normalized volume of the cone over $\Delta$ in $\R^{n+1}$) and
\begin{equation} \label{LalphaB}
L(w \fbar,\G_m^{n+1},T)^{(-1)^n} = \det(1-\alpha_a T \mid B) \in 1+T\Z[T]
\end{equation}
(using (\ref{ZetaLZeta})).

Let $A$ (resp.\ $A_a$) be a matrix of $\alpha$ (resp.\ $\alpha_a$) acting on $B$.  Then we have
\begin{equation} \label{alphaafact}
A_a = A A^{\sigma^{-1}} \cdots A^{\sigma^{-(a-1)}}.
\end{equation}
where $A^{\sigma}$ denotes the matrix where $\sigma$ is applied to each entry in the matrix.

\subsection*{An overview of the algorithm}

We now describe how to effectively compute the terms in the formula (\ref{LalphaB}), and in particular the matrix $A$ (\ref{alphaafact}).  We sketch an overview and wait to describe each of these steps and their running time in detail in the sections that follow.

The algorithm takes as input a nondegenerate (confined) Laurent polynomial $\fbar$ and a precision $N \in \Z_{\geq 0}$, and it produces as output the polynomial $\det(1-\alpha_a T \mid B)$ modulo $p^N$.  For $N$ large, we recover the coefficients in $\Z$ and then from (\ref{ZetaLZeta}) we recover $Z(\fbar,T)$.

Let $R=\Z_q/p^N$.  (By carefully factoring out the algebraic element $\pi$, we may work in this smaller ring; see Lemma \ref{ordpw} and the accompanying discussion.)

In (\ref{definitionofL}) we have worked with the power series ring $L_\Delta$, but by the convergence behavior of elements of $L_\Delta$, working modulo $p^N$ these power series become polynomials.  So we define
\[ R[w\Delta] = \bigoplus_{d=0}^{\infty} R[w\Delta]_d \]
where
\[ R[w\Delta]_d = \bigoplus_{\nu \in d\Delta \cap \Z^n} R w^d x^{\nu}. \]
The ring $R[w\Delta]$ is the monoid algebra arising from the cone over $\Delta$ with coefficients in $R$, and it is naturally $\Z_{\geq 0}$-graded by $w$.  Recall (\ref{fi}) that we have defined
\[ f_i=wx_i\frac{\partial f}{\partial x_i} \]
for $i=1,\dots,n$ (and $f_0=wf$).

Our algorithm has 4 steps.

\begin{enumerate}
\item[1.] Compute the Teichm\"uller lift $f$ of $\fbar$.  Using linear algebra over $R$, compute a monomial basis $V$ for the \emph{Jacobian ring}
\[ J  = R[w\Delta]/(wf, wf_1, \dots, wf_n). \]
(The monomial basis $V$ for $J$ yields a basis for $B$.)

\item[2.] For each monomial $m \in V$, compute the action of the Frobenius $\alpha(m)$ using ``fewnomial enumeration''.  

\item[3.] For each $m \in V$, reduce $\alpha(m)$ in cohomology using the differential operators $D_i$ to an element in the $R$-span of $V$.  (The matrices implicitly computed in Step 1 are used in this reduction.)

\item[4.]  Compute the resulting matrix $A=\alpha \mid B$ modulo $p^N$, then compute $A_a$ using (\ref{alphaafact}) and finally 
\[ Z(V, qT)=\det(1-TA_a)^{(-1)^n} Z((\G_m^n)_{\F_q},T). \]  
Output $Z(V, T)$.
\end{enumerate}

\section{The algorithm}

We now describe each step of the algorithm announced in our main theorem and introduced in section 1.  We retain the notation from section 1.

\subsection*{Step 1: Computing the Jacobian ring}

We begin by describing the computation of a basis of the Jacobian ring
\[ J  = \frac{R[w\Delta]}{(wf, wf_1, \dots, wf_n)}; \]
as result of this computation, we also obtain matrices which will be used in the reduction step in Step 3.  

\begin{lemma} \label{nplus1red}
If $f$ is nondegenerate, then $J$ is a free $R$-module with basis of cardinality $v=\Vol(\Delta)$ consisting of monomials with degree $\leq n+1$.
\end{lemma}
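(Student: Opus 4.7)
The plan is to combine the Cohen--Macaulayness of the toric coordinate ring $R[w\Delta]$ with a Hilbert-series accounting, reducing the lemma to showing that the Jacobian generators form a regular sequence. The ring $R[w\Delta]$ is the semigroup algebra of the cone over $\Delta$, a normal affine toric $R$-algebra of Krull dimension $n+1$; by Hochster's theorem it is Cohen--Macaulay, and its Hilbert series with respect to the $w$-grading is
\[ \sum_{d \geq 0} \#(d\Delta \cap \Z^n)\, t^d = \frac{h^*_\Delta(t)}{(1-t)^{n+1}}, \]
where $h^*_\Delta(t)$ is the $h^*$-polynomial of $\Delta$, a polynomial of degree at most $n$ whose coefficients sum to $v = n!\vol(\Delta)$. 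The generators $wf$ and $w x_i \partial f/\partial x_i$ of the Jacobian ideal (for $i = 1,\dots,n$) are $n+1$ homogeneous elements, each of $w$-degree $1$. Granting that they form a regular sequence, the quotient $J$ has Hilbert series $(1-t)^{n+1} \cdot h^*_\Delta(t)/(1-t)^{n+1} = h^*_\Delta(t)$. This delivers both assertions of the lemma at once: $J$ is a free $R$-module of rank $\sum_d h^*_d = v$, and its graded pieces $J_d$ vanish for $d > n$, so every basis monomial can be chosen of $w$-degree $\leq n+1$.

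The crux is then the regular-sequence claim. In a Cohen--Macaulay graded ring of Krull dimension $n+1$, $n+1$ homogeneous elements form a regular sequence if and only if their common vanishing locus is the irrelevant ideal, i.e., they are a homogeneous system of parameters. Reducing modulo the maximal ideal of $R$, the condition that $\fbar$ is $\Delta$-nondegenerate says precisely that $\fbar|_\tau$ together with its logarithmic derivatives has no common zero on the open torus, for every face $\tau \subseteq \Delta$; summing over the face stratification of the toric variety attached to $\Delta$ gives exactly that the Jacobian ideal is $\mathfrak{m}$-primary in $\F_q[w\Delta]$. Because $R = \Z_q/p^N$ is local with residue field $\F_q$ and $R[w\Delta]$ is $R$-flat, Nakayama's lemma and the constancy of the Hilbert function under flat deformation lift the regular-sequence property, and hence the Hilbert series $h^*_\Delta(t)$, from $\F_q[w\Delta]$ to $R[w\Delta]$. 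A monomial basis for $J$ is then extracted by enumerating monomials $w^d x^\nu$ with $0 \leq d \leq n+1$ and $\nu \in d\Delta \cap \Z^n$ and performing Gaussian elimination over $R$ on their images modulo the generators.

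The main obstacle is the face-by-face translation of the $\Delta$-nondegeneracy hypothesis into the system-of-parameters statement on the toric coordinate ring. This is essentially the algebraic content of the Adolphson--Sperber freeness theorem already cited for $B$, so rather than redo that Koszul-complex argument I would invoke it directly, with the only additional bookkeeping being to track the $w$-grading carefully so that it matches the Hilbert-series calculation above.
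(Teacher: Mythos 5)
Your proof is correct and relies on the same key external input as the paper's one-line argument, but packages it differently. The paper simply cites Monsky and Adolphson--Sperber for acyclicity of the Koszul complex modulo $p$, then lifts to $R$ by flatness, leaving the rank $v$ and the degree bound implicit in those references. You instead pass through the Cohen--Macaulayness of the Ehrhart ring $\F_q[w\Delta]$ (Hochster) and the equivalence, in a CM graded ring of dimension $n+1$, between ``$n+1$ degree-one elements form a regular sequence'' and ``they are a homogeneous system of parameters''; this is logically the same fact as Koszul acyclicity, but it lets you read off the Hilbert series $h^*_\Delta(t)$ of $J$ directly from the Ehrhart series of $\Delta$. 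That Ehrhart bookkeeping is the genuine added value: it gives the rank $v = \sum_i h^*_i$ and in fact the sharper degree bound $\leq n$ (since $\deg h^*_\Delta \leq n$), compared with the lemma's stated $\leq n+1$. You still defer the face-by-face translation of $\Delta$-nondegeneracy into the $\mathfrak m$-primary (system-of-parameters) condition to Adolphson--Sperber, which is exactly what the paper does, so the hard step is not re-proven in either version. The lift from $\F_q$ to $R = \Z_q/p^N$ via flatness of the graded pieces and Nakayama is standard and matches the paper's ``complete, separated, flat'' phrasing. One small caution in your write-up: the generators must all be read as sitting in $w$-degree $1$, i.e.\ the ideal is $(f_0,f_1,\dots,f_n)$ with $f_i = w\,x_i\,\partial f/\partial x_i$; the paper's notation $(wf, wf_1,\dots,wf_n)$ literally parsed would put $wf_i$ in degree $2$ and break the Hilbert-series computation, so you are implicitly (and correctly) reading past a notational slip.
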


\begin{proof}
The proof of Monsky \cite{Monsky} (see work of Adolphson and Sperber \cite[Appendix]{ASExpsums}) shows that under the nondegeneracy hypothesis, the associated Koszul complex is acyclic modulo $p$ and therefore lifts to an acyclic complex over $R$ (as the modules are complete, separated, and flat over $\Z_q$).
\end{proof}

To compute with the ring $R[w\Delta]$, we need some standard algorithms for computing with polytopes.  For a set $S \subseteq \Z^n$, we denote by $\Delta(S)$ its convex hull.

\begin{lemma} \label{enumlatticepointslem}
There exists an efficient algorithm that, given a finite set $S \subseteq \Z^n$, computes the set $\Z^n \cap \Delta(S)$.
\end{lemma}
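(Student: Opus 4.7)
The plan is to give a straightforward algorithmic construction rather than any intricate mathematical argument; the lemma asserts existence of an efficient procedure, and with $n$ fixed everything reduces to well-known polytope subroutines plus a bounded enumeration.

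First I would compute an H-representation of $\Delta=\Delta(S)$, i.e., a finite list of halfspace inequalities $\la u_j,x \ra \leq b_j$ with $u_j \in \Z^n$ and $b_j \in \Z$ whose intersection is $\Delta$. This can be obtained from $S$ by any standard convex-hull algorithm (e.g., the double-description method, gift wrapping, or QuickHull); in fixed dimension $n$ the number of facets is polynomial in $|S|$ and the runtime is polynomial in the total bit-size of $S$. At the same time I would record a bounding box for $\Delta$ by taking coordinatewise minima and maxima $m_i = \min_{\nu \in S} \nu_i$ and $M_i = \max_{\nu \in S} \nu_i$ for $i=1,\dots,n$; clearly $\Delta \subseteq \prod_i [m_i,M_i]$.

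Next I would enumerate every lattice point $\mu \in \prod_i [m_i,M_i] \cap \Z^n$ and test whether $\la u_j, \mu \ra \leq b_j$ holds for each facet inequality $j$; the accepted points are precisely $\Z^n \cap \Delta$. Each membership test costs a polynomial number of integer comparisons, and the outer loop size $\prod_i (M_i - m_i + 1)$ is polynomial in the coordinates of $S$ when $n$ is fixed. If one wants to keep the enumeration linear in the output size rather than in the bounding-box size, one can instead sweep along one coordinate at a time: for each fixed value of $x_1$ the slice $\Delta \cap \{x_1 = \text{const}\}$ is again a polytope given by the induced H-representation, and one recursively enumerates its lattice points; this avoids visiting boxes that miss $\Delta$ entirely and reduces everything to the $n=1$ case, which is a pair of integer inequalities.

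The main obstacle, such as it is, is simply bookkeeping: verifying that the convex-hull subroutine outputs an H-representation of $\Delta$ with integer $u_j,b_j$ of bit-size polynomial in that of $S$, and that each intermediate computation (facet enumeration, slice computation, membership test) stays within polynomial bit-complexity for fixed $n$. Both are standard in the computational geometry literature, so I would cite Gr\"otschel--Lov\'asz--Schrijver or a comparable reference for the convex hull step and leave the inner loop to the preceding paragraph, thereby completing the proof.
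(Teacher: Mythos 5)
Your proof is correct, and it would serve for the lemma as stated (and for the paper's actual application, which only ever analyzes confined polytopes), but it follows a genuinely different route from the paper's and does not recover the sharper bound of Proposition~\ref{enumeratelatticepoints}. The paper's argument proceeds by (i) computing a Delaunay triangulation of $S$ --- lifting each $\nu$ to $(\nu,\|\nu\|^2)\in\Z^{n+1}$ and running Chazelle's optimal incremental convex-hull algorithm --- and then (ii) for each simplex $\Sigma$ of the triangulation, computing the Hermite normal form of its edge matrix and enumerating coset representatives in the associated fundamental parallelepiped, as in Bruns--Koch, at a cost of $O(\Vol(\Sigma)\log\delta)$. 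Summing over the simplices gives $\widetilde{O}(s^{\lceil n/2\rceil}\log\delta + v\log\delta)$: the enumeration cost is proportional to $v$, with only a $\log\delta$ factor for integer arithmetic. Your bounding-box enumeration instead iterates over $\prod_i (M_i - m_i + 1)$ candidate points, and even the slice-sweep refinement must touch $\Omega(M_1-m_1)$ top-level slices (and up to $\Omega(\prod_i (M_i - m_i))$ slices overall), most of which may be lattice-point-free. For a long, thin lattice polytope with $v=O(1)$ but coordinate range $\delta$, that is $\Omega(\delta)$ work where the paper's method spends $\widetilde{O}(\log\delta)$. The two approaches agree up to polynomial factors in $\log\delta$ precisely when $\Delta$ is confined, since then $\prod_i(M_i-m_i+1)=O(v)$; that is the only regime the paper uses downstream. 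So the trade-off is: your argument is simpler and avoids triangulation and Hermite-normal-form machinery, while the paper's achieves an output-sensitive bound with logarithmic dependence on $\delta$, which is what lets Proposition~\ref{enumeratelatticepoints} dispense with a confinement hypothesis in its stated runtime.
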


We discuss this algorithm and its running time in detail in the next section (Proposition \ref{enumeratelatticepoints}); it will be treated as a black box for now.  

Let $\prec$ be a term order on the monomials in $R[w\Delta]$ that respects the $w$-grading.  We begin by computing in each degree $d=0,\dots,n+2$ the set of monomials in $d\Delta$ using Lemma \ref{enumlatticepointslem} and we order them by $\prec$.  Then, for each such $d$ a spanning set for the degree $d$ subspace of the Jacobian ideal, $(wf,wf_1,\dots,wf_n)_d$, is given by the products of the monomials in $(d-1)\Delta$ with the generators $wf,wf_1, \dots, wf_n$ of the Jacobian ideal.  Finally, for each $d$, let $J_d$ be the matrix whose columns are indexed by $\Z^n \cap d\Delta$, i.e.\ the monomial basis for $R[w\Delta]_d$ ordered by $\prec$, and whose rows record the coefficients of the spanning set for $(wf,wf_1,\dots,wf)_d$.  

We then compute the row-echelon form $M_d = T_d J_d$ of $J_d$ for $d=0,\dots,n+2$ using linear algebra over $R$.  According to Lemma \ref{nplus1red}, every pivot in $M_d$ can be taken to be a unit in $R$; thus, a monomial basis $V$ for $J$ is then obtained as $\bigcup_d V_d$ where $V_d$ is a choice of monomial basis for the cokernels of $M_d$.  In particular, the matrix $M_{n+2}$ has full rank and so has a maximal square submatrix with unit determinant.

\subsection*{Step 2: Computing the action of Frobenius}

Recall we have defined the Dwork splitting function
\[ \theta(t)=\exp(\pi(t-t^p)) = \sum_{i=0}^{\infty} \lambda_i t^i \in \Z_p[\pi][[t]] \]
with $\ord_p \lambda_i \geq i (p-1)/p^2$.  The image of $\theta(t)$ modulo $p^N$ in $R[\pi][[t]]$ is a polynomial of degree less than $Np^2/(p-1) = N(p+1+1/(p-1))$.  We compute $\theta(t)=\exp(\pi t)\exp(-\pi t^p)$ as the product of two polynomials of this degree.  

\begin{remark}
Here we have used that $p$ is odd.  For $p=2$, the splitting function $\theta(t)$ above does not converge sufficiently fast for the algorithm described below to work.  For the modifications necessary, see section 5.
\end{remark}

We have $f=\sum_\nu a_\nu x^\nu \in \Z_q[x^{\pm}]$ with $a_\nu \neq 0$ and $\nu \in \Z^n$ and
\begin{equation} \label{Fxexpans}
F(w,x)=\prod_{\nu} \theta(a_\nu w x^\nu).
\end{equation}

One option is to multiply out the product (\ref{Fxexpans}) naively.  Instead, we seek to take advantage of the fewnomialness of $f$.  We have
\begin{equation} \label{prod25}
F(w,x) =\prod_{\nu} \theta(a_\nu wx^\nu) = \prod_\nu \left(1+\lambda_1(a_\nu wx^{\nu}) + \dots + \lambda_j(a_\nu wx^{\nu})^j+\dots\right).
\end{equation}
Let $\supp(f)=\{\nu_1,\dots,\nu_s\}$ and abbreviate $a_i=a_{\nu_i}$.  Expanding (\ref{prod25}) out we obtain
\begin{equation} \label{enumeq_2}
F(w,x) = \sum_{(e_1,\dots,e_s) \in \Z_{\geq 0}^s} 
\left(\lambda_{e_1} \cdots \lambda_{e_s}\right) \left(a_1^{e_1} \cdots a_s^{e_s}\right) w^{e_1+\dots+e_s} x^{e_1\nu_1  + \dots + e_s\nu_s}.
\end{equation}
We make further abbreviations using multi-index notation as follows: for $e \in \Z_{\geq 0}^s$, which we abbreviate $e \geq 0$, we write $\lambda_e=\lambda_{e_1} \cdots \lambda_{e_s}$ and $a^e=a_1^{e_1} \cdots a_s^{e_s}$, and we write $|e|=e_1+\dots+e_s$, and finally $e\nu=e_1\nu_1+\dots+e_s\nu_s$ for the dot product.  Then (\ref{enumeq_2}) becomes simply
\begin{equation} \label{enumeq_2.5}
F(w,x) = \sum_{e \geq 0} \lambda_e a^e w^{|e|} x^{e\nu}.
\end{equation}

Let $w^d x^\mu \in V$.  Then from (\ref{enumeq_2.5}) we have
\begin{equation} \label{enumeq_3}
\alpha(w^d x^{\mu}) = (\psi \circ F)(w^d x^{\mu}) = \sum_{\substack{e \geq 0 \\ p \mid (e\nu+\mu) \\ p \mid (|e|+d)}} \lambda_e \sigma^{-1}(a^e) w^{(|e|+d)/p} x^{(e\nu+\mu)/p} \in L_\Delta. 
\end{equation}
The set of indices for the sum on the right side of (\ref{enumeq_3}) is then contained in the set
\begin{equation} \label{enumeq_4}
E_{d,\mu}=\{e \in \Z_{\geq 0}^s:e_i\nu_i \equiv -\mu_i \psmod{p}\text{ for all $i$ and }|e| \equiv -d \psmod{p}\}.
\end{equation}

When the set $E_{d,\mu}$ is proportionally fewnomial relative to the set of all monomials, we can hope to be able to enumerate it faster than multiplying out $F$ completely.  

\begin{remark}
On the other hand, if $f$ is dense, in general we gain no advantage using this approach, as can be seen by the following example.  Let $f$ be a generic univariate polynomial of degree $s$, so that  $n=1$ and $\supp(f)=\{0,1,\dots,s\}$, and let $\mu \in \Z_{>0}$.  Then there is a bijection between $E_{d,\mu}$ and the set of all (integer) partitions of $d \equiv \mu \pmod{p}$ into parts of size $\leq s$.  Since the number of such partitions grows exponentially with $d$, enumerating all such partitions would be prohibitively time consuming.
\end{remark}

We can compute the set $E_{d,\mu}$ by considering the corresponding set of linear equations modulo $p$.  Let $U$ denote the $(n+1) \times s$ matrix whose columns are the vectors $(1,\nu_i)^t$, and let 
\[ K=K(d,\mu)=\{e:Ue \equiv (d,\mu)^t \psmod{p}\} \subseteq (\Z/p\Z)^s. \]
We identify $K$ with its image in $\Z_{\geq 0}^s$ by taking the smallest nonnegative residue in each component.  Then (\ref{enumeq_4}) becomes simply
\[ E_{d,\mu}=K + (p\Z_{\geq 0})^s. \]
Let $\rho$ be the rank of $U$ modulo $p$.  Then $\# K = p^{s-\rho}$, since by our assumption $\dim(\Delta)=n$ so $s \geq n+1$.

Rewriting (\ref{enumeq_3}), with this notation we obtain
\begin{equation} \label{enumeq_5}
\begin{split}
\alpha(w^d x^{\mu}) &= \sum_{k \in K} \sum_{e \geq 0} \lambda_{k+pe} \sigma^{-1}(a^{k+pe}) w^{(|k|+p|e|+d)/p} x^{\left((k+pe)\nu+\mu\right)/p} \\
&= \sum_{k \in K} \sigma^{-1}(a^k) w^{(|k|+d)/p} x^{(k\nu+\mu)/p} \left( \sum_{e \geq 0} \lambda_{k+pe} a^e w^{|e|} x^{e\nu}\right).
\end{split}
\end{equation}
Here we used the fact that $\sigma^{-1}((a^p)^e) = a^e$ since $a$ is a Teichm\"uller element.  We then compute $\alpha(w^d x^{\mu})$ using the formula (\ref{enumeq_5}).  

We make one final substitution, which will simplify the reduction: we carefully factor out the algebraic element $\pi$ (satisfying $\pi^{p-1} = -p$) as follows.

\begin{lemma} \label{ordpw}
We have $\ord_p \lambda_i \equiv i/(p-1) \in \Q/\Z$.
\end{lemma}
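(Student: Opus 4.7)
The plan is to exploit the factorization $\theta(t) = \exp(\pi t)\,\exp(-\pi t^p)$, expand both factors as formal power series, and extract the coefficient $\lambda_i$ by the Cauchy product. Concretely, using $\exp(\pi t) = \sum_{j \geq 0} \pi^j t^j / j!$ and $\exp(-\pi t^p) = \sum_{k \geq 0} (-\pi)^k t^{pk}/k!$, only pairs $(j,k)$ with $j + pk = i$ contribute, giving
\[ \lambda_i = \sum_{k=0}^{\lfloor i/p\rfloor} \frac{(-1)^k \pi^{i-(p-1)k}}{(i-pk)!\,k!}. \]

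The next step is to use the defining relation $\pi^{p-1}=-p$ to clear the hidden factors of $\pi$ from each summand. Since $\pi^{(p-1)k} = (-p)^k$, we have $\pi^{i-(p-1)k} = \pi^i (-p)^{-k}$, and the sign $(-1)^k$ cancels against the $(-1)^{-k}$ coming from $(-p)^{-k}$. This pulls $\pi^i$ cleanly out of every term and leaves
\[ \lambda_i = \pi^i \cdot S_i, \qquad S_i = \sum_{k=0}^{\lfloor i/p\rfloor} \frac{1}{p^k\,(i-pk)!\,k!} \in \Q. \]

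Finally, I would observe that every summand of $S_i$ is a strictly positive rational, so $S_i \in \Q^\times$ and in particular $\ord_p S_i \in \Z$. Combined with $\ord_p \pi^i = i/(p-1)$, this gives $\ord_p \lambda_i = i/(p-1) + \ord_p S_i \equiv i/(p-1) \pmod{\Z}$, as claimed. There is no real obstacle here: the only point that needs any care is confirming that $S_i$ does not vanish, but this is immediate from positivity of the individual summands.
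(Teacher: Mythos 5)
Your proof is correct and follows essentially the same route as the paper, which also factors $\theta(t)=\exp(\pi t)\exp(-\pi t^p)$ and argues that since the $i$th coefficient of each factor lies in $\pi^i\Q_p$, so does the $i$th coefficient of the Cauchy product. Your explicit formula $\lambda_i=\pi^i S_i$ with $S_i=\sum_k \bigl(p^k(i-pk)!\,k!\bigr)^{-1}$ has the modest added value of confirming by positivity that $\lambda_i\neq 0$, a nonvanishing point that the paper's one-line argument leaves implicit.
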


\begin{proof}
The $i$th coefficient of both $\exp(\pi t)$ and $\exp(-\pi t^p)$ satisfy the congruence, and consequently the same is true of the product.
\end{proof}

Therefore, in the expansion $\theta(t)=\sum \lambda_i t^i$ we write $\lambda_i=\pi^i \ell_i$ so that $\ell_i \in \Q_p$ and 
\begin{equation} \label{elldenombounded}
\ord_p(\ell_i) \geq i\left(\frac{p-1}{p^2} - \frac{1}{p-1}\right) = -i\frac{2p-1}{p^2(p-1)}.
\end{equation}
Although we introduce some denominators here, they are controlled.  Extending our multi-index notation, we obtain
\[ \alpha(w^d x^{\mu}) = \sum_{k \in K} \sigma^{-1}(a^k) (\pi w)^{(|k|+d)/p} x^{(k\nu+\mu)/p} \left( \sum_{e \geq 0} \pi^{|k|+p|e|-|e|-(|k|+d)/p} \ell_{k+pe} a^e (\pi w)^{|e|} x^{e\nu}\right). \]
Since $\pi^{p-1}=-p$, we write 
\[ |k|+p|e|-|e|-(|k|+d)/p = (p-1)|e| -d + (p-1)(|k|+d)/p . \]
Thus
\begin{equation} \label{enumeq_6}
\begin{split} 
\alpha((\pi w)^d x^{\mu}) = \sum_{k \in K} \sigma^{-1}(a^k) (-p)^{(|k|+d)/p} &(\pi w)^{(|k|+d)/p} x^{(k\nu+\mu)/p} \\ &\cdot \left( \sum_{e \geq 0} (-p)^{|e|} \ell_{k+pe} a^e (\pi w)^{|e|} x^{e\nu}\right). 
\end{split}
\end{equation}
(We introduce the power $\pi^d$ to simplify this expansion; we may then just divide out at the end.)  

\subsection*{Step 3: Reducing in cohomology}

Our goal in this step is to determine the characteristic polynomial of Frobenius $\alpha$ acting on 
\[ H^{n+1}(\Omega^{\omegadot})=L_\Delta/\textstyle{\sum_{i=0}^n} D_i L_\Delta \]
modulo $p^N$, where $N$ is the desired precision.  The preceding analysis, including Lemma \ref{ordpw} and particularly the expression (\ref{enumeq_6}), shows that $\alpha$ acting on $(\pi w)^d x^\nu$ with $\nu \in d\Delta$ is a series in terms of the form $(\pi w)^e x^{\mu}$ with $\mu \in e\Delta$ and coefficients in $\Z_q$.  The $p$-adic behavior of Frobenius assures us that the coefficients of $(\pi w)^e x^{\mu}$ tend to $0$ in $\Z_q$ as $|e| \to \infty$.  Therefore, modulo $p^N$, the image $\alpha ( (\pi w)^d x^{\nu})$ is a polynomial $G \in R[(\pi w)\Delta]$, a linear combination of terms $(\pi w)^e x^{\mu}$ ($\mu \in e\Delta$) with coefficients in $\Z_q/p^N$.  Since the factor $\pi$ enters only formally, we may suppress the $\pi$ factor and view the resulting polynomial in $R[w\Delta]$.  From Lemma \ref{nplus1red}, we work with coefficients in $\Z_q/p^N$, rather than using the iterative reduction method described by Adolphson and Sperber \cite[Theorem 2.18]{ASExpsums}.

Therefore, let $G \in R[(\pi w)\Delta]$; we show how to reduce $G$ in cohomology.  Let $\lm(G)$ be the leading monomial (highest degree monomial) in $G$ with respect to $\prec$.  First suppose that the degree of $\lm(G)$ (in $w$) is at least $n+2$.  Let $m_0$ be a monomial in $R_{n+2}$ that divides $\lm(G)$ (a precise choice will be given in the next section), and let $m=\lm(G)/m_0$.  Let $G^{(m)}$ consist of those terms in $G$ in $m R[w\Delta]_{n+2}$ and identify $\xi=G^{(m)}/m \subseteq R[w\Delta]_{n+2}$ with a row vector indexed by the monomials of $R[w\Delta]_{n+2}$, each containing the term $(\pi w)^{n+2}$ by our convention.

Let $\eta = \xi T_d$.  Since $J_{n+2}$ is of full rank, we have 
\[ \xi = \xi M_d = (\xi T_d) J_d = \eta J_d. \]
Thus, with the natural identifications, we have written 
\begin{equation} \label{etas!}
\xi= \eta_0 (\pi w) f_0 + \eta_1 (\pi w) f_1 + \dots + \eta_n (\pi w) f_n
\end{equation}
with $\eta_0, \dots, \eta_n \in R[(\pi w)\Delta]_{n+1}= \pi^{n+1} R[w\Delta]$, and therefore
\[ (m\eta_0)(\pi w) f_0 + \dots + (m\eta_n) (\pi w) f_n = m\xi = G^{(m)}. \]
Recall that we work in the module $B=L/\sum_i D_i L$, where $D_i = x_i \partial/\partial x_i + (\pi w) f_i$ (and we again set $w=x_0$ for convenience): this  implies that in $B$, we have the relation
\begin{equation} \label{alphamred}
G^{(m)} \equiv -\left( x_0 \frac{\partial (m \eta_0)}{\partial x_0} + \dots + 
x_n \frac{\partial (m \eta_n)}{\partial x_n}\right) \in B.
\end{equation}
Note now that all terms in the reduction (\ref{alphamred}) have degree one smaller than that of $G^{(m)}$.  We then iterate this procedure on the leading monomial in $G-G^{(m)}$ until it is equivalent in $B$ to a polynomial of degree $\leq n+1$.

So now assume that $G$ has degree $d \leq n+1$, and let $G_d$ be the degree $d$ terms of $G$.  We repeat the above procedure, with $\xi = G_d$: we compute $\eta = \xi T_d$ and let $v_d = \xi - \eta$.  Note that $v_d$ is in the span of $V$ by construction.  Then by a similar calculation as in (\ref{etas!}), we have $\xi = v_d + \sum_{i=0}^{n} \eta_i (\pi w) f_i$ and so 
\[ G_d = \xi \equiv v_d - \sum_{i=0}^{n} x_i \frac{\partial \eta_i}{\partial x_i} \in B. \]
Completing the final iteration on decreasing $d$, we obtain $G \equiv \sum_{d=0}^{n+1} v_d \in B$ written in the span of $V$.  

\begin{remark} \label{nopreclossnodiv}
In this reduction theory, we never perform a division: we simply reduce the power of $\pi w$, as we have written $G$ as a polynomial in $\pi w$ and $x$.  Consequently, we do not lose precision in our analysis.  
\end{remark}

\subsection*{Step 4: Output}

To conclude, we assemble the reductions of $\alpha(w^d x^{\mu})$ for $w^d x^{\mu} \in V$ into a square matrix $A = \alpha \mid B$ of size $v \times v$, where $v=\#V$, with coefficients in $R$.  We then compute 
\[ A_a=A A^{\sigma^{-1}} \cdots A^{\sigma^{-(a-1)}}. \] 
We then compute $\det(1-TA_a)$ using standard methods (analyzed in section 4), and we recover the zeta function from (\ref{ZetaLZeta})--(\ref{LbyZeta}): we compute
\[ Z(\overline{V},qT)=\det(1-TA_a)^{(-1)^n} Z((\G_m^n)_{\F_q},T) \]
from which we easily obtain the desired output $Z(\overline{V},T)$.

\section{Some algorithms for polytopes}

In this section, we describe some methods for computing with polytopes which are used as subroutines.  

\subsection*{Confined polytopes}

By \emph{polytope} we will always mean a lattice polytope.  Let $\Delta \subseteq \R^n$ be a polytope with $\dim \Delta = n$ and normalized volume $v=\Vol(\Delta)$.  (If $\dim \Delta < n$ then by restricting to the linear space that contains $\Delta$ one can make appropriate modifications to the algorithms below.)  The group $\GL_n(\Z)$ acts on $\R^n$ preserving the set of polytopes; we say two polytopes $\Delta,\Delta'$ are \emph{$\GL_n(\Z)$-equivalent} if there exists $U \in \GL_n(\Z)$ such that $U(\Delta)=\Delta'$.  

By work of Lagarias and Ziegler \cite[Theorem 2]{LagariasZiegler}, any polytope $\Delta \subseteq \R^n$ is $\GL_n(\Z)$-equivalent to a polytope contained in a lattice hypercube of side length at most $nv$.  We now prove a slight extension of this result using their methods.

\begin{lemma} \label{LZext}
Any polytope $\Delta \subseteq \R^n$ is $\GL_n(\Z)$-equivalent to a polytope contained in a lattice orthotope (box) with side lengths $b_1,\dots,b_n$ satisfying $b_1 \cdots b_n \leq n^n v$.  
\end{lemma}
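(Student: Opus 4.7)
The plan is to argue by induction on the dimension $n$, following the Lagarias-Ziegler strategy but tracking the product of side lengths rather than the single maximum side. For $n = 1$ the statement is immediate: $\Delta$ is an interval of lattice length $v$ and the single side $b_1 = v$ satisfies $1^1 \cdot v = v$.

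For the inductive step with $n \geq 2$, I would first invoke the original Lagarias-Ziegler theorem to produce, after a $\GL_n(\Z)$-transformation, a primitive direction (which we may take to be $e_n$) in which $\Delta$ has bounded width $b_n$. Let $\pi \colon \R^n \to \R^{n-1}$ denote projection onto the first $n-1$ coordinates. Since $\dim \Delta = n$, the image $\pi(\Delta)$ is an $(n-1)$-dimensional lattice polytope; write $v_{n-1}$ for its normalized volume. By the inductive hypothesis applied to $\pi(\Delta)$, there is a $\GL_{n-1}(\Z)$-transformation (extending to $\GL_n(\Z)$ as the identity on $e_n$) after which $\pi(\Delta)$ lies in an orthotope of side lengths $b_1, \ldots, b_{n-1}$ with $\prod_{i=1}^{n-1} b_i \leq (n-1)^{n-1} v_{n-1}$; then $\Delta$ itself sits in an orthotope with $\prod_{i=1}^n b_i \leq b_n \cdot (n-1)^{n-1} v_{n-1}$.

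To close the induction it then suffices to establish the key inequality $b_n v_{n-1} \leq n^n v/(n-1)^{n-1}$. We always have the lower bound $b_n v_{n-1} \geq v/n$, which comes from the slab estimate $\vol(\Delta) \leq b_n \cdot \vol_{n-1}(\pi(\Delta))$, so we are looking for a matching upper bound up to a controlled constant. The main obstacle is that for a generic primitive direction the product $b_n v_{n-1}$ can vastly exceed $v$---a long skewed simplex in $\R^2$ already shows this---so everything hinges on choosing the slab direction carefully.

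I expect the proof to proceed either by adapting the Lagarias-Ziegler slicing argument itself to extract a direction in which $b_n v_{n-1} = O(v)$ with a constant tight enough for the induction, or by applying Minkowski's second theorem on successive minima to the difference body $\Delta - \Delta$ together with the Rogers-Shephard inequality, producing a lattice basis $u_1, \dots, u_n$ of $\Z^n$ whose associated widths on $\Delta$ have a product controlled by $\vol(\Delta)$. Either route reduces the problem to a delicate estimate for the widths of a centrally symmetric convex body under a well-chosen lattice basis; this estimate is the technical heart of the argument.
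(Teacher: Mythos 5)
Your proof has a genuine gap at exactly the step you flag: the inequality $b_n v_{n-1} \leq n^n v/(n-1)^{n-1}$ is never established, only named as the outstanding obstacle, and neither of the routes you sketch is likely to close it with the required constant. The Minkowski/Rogers--Shephard route controls the successive minima of the difference body $\Delta - \Delta$, but what you need is control over the \emph{widths} of $\Delta$ along a lattice basis, and the transference between the two costs factors that have no reason to cooperate with the sharp $n^n$. More fundamentally, the inductive slab strategy faces a trade-off you only gesture at: a direction chosen to make $b_n$ small (say, the lattice width, which Lagarias--Ziegler do bound by $O(nv)$) can blow up $\vol_{n-1}(\pi(\Delta))$, while a direction chosen to make the projection small can have huge width. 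Nothing in your outline identifies a direction that simultaneously controls both, and without that the induction does not close.

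The paper's argument is not an induction on dimension and sidesteps this trade-off entirely. It first confines a lattice simplex $\Sigma$ with vertices $v_0,\dots,v_n$: putting the edge matrix $B = (v_1-v_0,\dots,v_n-v_0)$ into Hermite normal form via some $U \in \GL_n(\Z)$ yields a lower-triangular matrix whose diagonal entries $a_{11},\dots,a_{nn}$ multiply to $\det(\Lambda) = \Vol(\Sigma)$, and $U\Sigma$ lies in a translate of the orthotope $[0,a_{11}]\times\cdots\times[0,a_{nn}]$ whose side-length product is therefore exactly $\Vol(\Sigma)$. For a general $\Delta$, one takes a maximal-volume lattice simplex $\Sigma \subseteq \Delta$ (so $\Vol(\Sigma)\leq v$) and uses the Lagarias--Ziegler containment $\Delta \subseteq -n\Sigma + (n+1)z$ with $z$ the centroid of $\Sigma$; this places $\Delta$ inside the $n$-fold dilate of the orthotope for $\Sigma$, whose side lengths $b_i = n a_{ii}$ satisfy $\prod b_i = n^n \Vol(\Sigma) \leq n^n v$. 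The Hermite normal form supplies the product bound automatically, and the dilation accounts for exactly the factor $n^n$; there is no width-versus-projected-volume estimate to fight.
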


\begin{proof}
We follow the proof of Lagarias and Ziegler \cite[Proof of Theorem 2]{LagariasZiegler} (working always with normalized volume); for convenience, we reproduce the main idea of their proof.  First suppose that the polytope is a simplex $\Sigma$ having vertices $v_0,\dots,v_n \in \Z^n$.  Then the lattice $\Lambda$ spanned by the basis vectors $w_i=v_i-v_0$ is a sublattice of $\Z^n$ with $\det(\Lambda)=\Vol(\Delta)=v$.  There is a matrix $U \in \GL_n(\Z)$ taking the matrix $B$ whose column vectors are $w_i$ to its Hermite normal form
\[
UB=\begin{pmatrix} 
a_{11} & 0 & \dots & 0 \\
a_{21} & a_{22} & \dots & 0 \\
\vdots & \vdots & \ddots & \vdots \\
a_{n1} & a_{n2} & \dots & a_{nn} 
\end{pmatrix} \] 
with $0 \leq a_{ji} < a_{ii}$ for $j>i$ and $a_{ii}>0$ for all $i$ and $a_{ji}=0$ for $j<i$.  Now $\det(\Lambda)=|\det(B)|=\prod_{i=1}^n a_{ii} = v$ and hence the parallelopiped generated by the row vectors of $UB$ is contained in the orthotope
\[ \Xi = \{ x \in \R^n : 0 \leq x_i \leq a_{ii} \text{ for $1 \leq i \leq n$}\} = [0,a_{11}] \times \dots \times [0,a_{nn}]. \]
The simplex $U\Sigma$ is contained in this parallellopiped and hence also the translated orthotope $\Xi+Uv_0$.

Suppose $\Delta$ is now an arbitrary polytope.  Then \cite[Theorem 3]{LagariasZiegler} there exists a maximal volume simplex $\Sigma \subseteq \Delta$, and moreover $\Delta$ is contained in the simplex $-n\Sigma + (n+1)z$ where $z=\sum_{i=0}^n v_i$ is the centroid of $\Sigma$ and $(n+1)z \in \Z^n$.  By the above, $\Sigma$ is $\GL_n(\Z)$-equivalent to a simplex contained in an orthotope $\Xi$ and hence $\Delta$ is equivalent to a polytope contained in the orthotope $-n \Xi + (n+1)z$ with side lengths $b_1,\dots,b_n$ with $b_i=n a_{ii}$ so $\prod_{i=1}^n b_i = n^n v$, as claimed.
\end{proof}

We say that $\Delta$ is \emph{confined} if $\Delta$ is contained in an orthotope (box) with side lengths $b_1,\dots,b_n$ with $b_1 \cdots b_n \leq n^n v$.  We say that $\fbar$ is \emph{confined} if $\Delta(\fbar)$ is confined.  

\begin{remark} \label{maxvolsimplex}
The problem of finding a maximum volume simplex, the key to making the proof of Lemma \ref{LZext} computationally effective, has been studied in detail (see, for example, work of Gritzmann, Klee, and Larman \cite{GritzKL}).  To avoid going too far afield into the field of computational geometry, in this article we accept any input Laurent polynomial $\fbar$ but only analyze the runtime when $\fbar$ is confined.  
\end{remark}

\begin{corollary} \label{corcntlattice}
We have $\# (\Delta \cap \Z^n) \leq (2n)^n v$.
\end{corollary}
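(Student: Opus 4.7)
The plan is to reduce to the case of a confined polytope and then bound lattice points in an orthotope.

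First I would observe that the quantity $\#(\Delta \cap \Z^n)$ is invariant under the $\GL_n(\Z)$-action on $\R^n$ (together with translation by a lattice vector), since such transformations are bijections of $\Z^n$ sending $\Delta$ to a congruent polytope. Hence, applying Lemma \ref{LZext}, I can replace $\Delta$ by a $\GL_n(\Z)$-equivalent polytope contained in a lattice orthotope $\Xi = [0,b_1] \times \cdots \times [0,b_n]$ with integer side lengths $b_1,\dots,b_n$ satisfying $b_1 \cdots b_n \leq n^n v$.

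Next I would bound the lattice points of $\Xi$, not of $\Delta$ directly. Clearly
\[ \#(\Delta \cap \Z^n) \leq \#(\Xi \cap \Z^n) = \prod_{i=1}^n (b_i + 1). \]
Because $\dim \Delta = n$, the enclosing orthotope has positive volume in every coordinate direction, so each $b_i$ is a positive integer. Thus $b_i + 1 \leq 2 b_i$ for every $i$, giving
\[ \prod_{i=1}^n (b_i+1) \leq 2^n \prod_{i=1}^n b_i \leq 2^n \cdot n^n v = (2n)^n v, \]
which is the stated inequality.

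There is no real obstacle here: the corollary is an immediate consequence of Lemma \ref{LZext} combined with the trivial bounds $b_i+1 \leq 2b_i$ for positive integers $b_i$. The only subtlety worth mentioning is the use of $\dim \Delta = n$ (assumed throughout the paper) to guarantee $b_i \geq 1$, without which the factor $2^n$ would need to be replaced.
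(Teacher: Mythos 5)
Your proof is correct and follows essentially the same route as the paper: apply Lemma \ref{LZext}, note that $\GL_n(\Z)$-equivalence preserves lattice-point counts, and bound $\prod_i(b_i+1) \leq 2^n\prod_i b_i \leq (2n)^n v$. You are slightly more careful than the paper in justifying $b_i+1 \leq 2b_i$ via $b_i \geq 1$ (using $\dim\Delta = n$), but this is the same argument.
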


\begin{proof}
The action of $\GL_n(\Z)$ preserves lattice points, and so if $\Xi$ is the orthotope given by Lemma \ref{LZext} then 
\[ \#(\Delta \cap \Z^n) \leq \#(\Xi \cap \Z^n) = (b_1+1)\cdots (b_n+1) \leq (2b_1)\cdots (2b_n) \leq 2^n (n^n v) \]
as claimed.
\end{proof}

\begin{remark}
Corollary \ref{corcntlattice} is in some sense best possible, since equality holds for $\Delta=[0,1] \subseteq \R$.  For fixed $n$, one can do no better than $\# (\Delta \cap \Z^n) = O(v)$ since for any polytope $\Delta$ we have $\#(d\Delta \cap \Z^n) \sim \Vol(d\Delta)/n!$ as $d \to \infty$.  For a given $n$, one can improve the constant $(2n)^n$ significantly working in a more general context (see Widmer \cite{Widmer}).
\end{remark}

\subsection*{Enumerating lattice points}

Our next task (arising in Step 1) is to exhibit an algorithm to enumerate the lattice points in the convex hull of a set of lattice points.

Let $S \subseteq \Z^n$ be a nonempty finite set.  Let $\Delta=\Delta(S)$ denote the convex hull of $S$.   Let $v=\Vol(\Delta)$ and $s=\# S$.  Suppose $\dim(\Delta) = n$.  Finally, let $\delta=\delta(S)=\max_{\nu \in S} |\nu|$ where $|\nu|=\max_i |\nu_i|$.  Then the bit size of $S$ is $O(s\log \delta)$.  

\begin{proposition} \label{enumeratelatticepoints}
Let $n \in \Z_{>0}$.  Then there exists an algorithm that, given a finite set $S \subseteq \Z^n$, computes the set $\Z^n \cap \Delta(S)$ in time $\widetilde{O}(s^{\lceil n/2 \rceil}\log \delta + v \log \delta)$.  In particular, if $\Delta$ is confined then this runs in time $\widetilde{O}(s^{\lceil n/2 \rceil}+v)$.
\end{proposition}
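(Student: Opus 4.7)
The plan is to split the task into two subproblems: first convert the vertex list $S$ into a facet (half-space) description of $\Delta = \Delta(S)$, and then enumerate the lattice points of $\Delta$ using that description.

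For the first subproblem, I would invoke a worst-case optimal convex hull algorithm such as Chazelle's, which produces an intersection of half-spaces $\{x \in \R^n : Ax \leq b\}$ realizing $\Delta$. By the Upper Bound Theorem the output has $O(s^{\lfloor n/2 \rfloor})$ facets, each determined by $n$ points of $S$ via an $n \times n$ integer determinant whose entries have absolute value at most $\delta$, hence whose coefficients have bit size $O(\log \delta)$ (with constants depending only on $n$). Using fast integer arithmetic this stage costs $\widetilde{O}(s^{\lceil n/2 \rceil} \log \delta)$ bit operations.

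For the second subproblem, I would enumerate $\Delta \cap \Z^n$ by recursion on dimension. Given a half-space description of a full-dimensional polytope $P \subseteq \R^m$, determine the integer range $[c,d]$ of the coordinate $x_m$ by solving two one-dimensional linear programs against the inequalities; then for each integer $k \in [c,d]$, substitute $x_m = k$ to obtain a half-space description of the $(m-1)$-dimensional slice $P \cap \{x_m = k\}$, and recurse, emitting one lattice point at the base case $m = 0$. When $\Delta$ is confined with bounding box of side lengths $b_1,\dots,b_n$ satisfying $\prod_i b_i = O(v)$, the number of recursion nodes at depth $k$ is bounded by $b_n b_{n-1} \cdots b_{n-k+1}$, so the total number of recursion nodes is $O(v)$ with a constant depending only on $n$. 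Each node performs $\widetilde{O}(\log \delta)$ bit operations on the facet description, for a total of $\widetilde{O}(v \log \delta)$ in this stage. Combining the two stages yields the claimed $\widetilde{O}(s^{\lceil n/2 \rceil} \log \delta + v \log \delta)$; when $\Delta$ is confined one has $\delta = O(v)$, so $\log \delta$ is absorbed into the soft-Oh notation, simplifying to $\widetilde{O}(s^{\lceil n/2 \rceil} + v)$.

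The main technical obstacle is arranging the recursive slicing so that the work per node is only $\widetilde{O}(\log \delta)$ rather than $\widetilde{O}(s^{\lfloor n/2 \rfloor} \log \delta)$: a naive re-examination of all facets at every slice would multiply the enumeration cost by the facet count, destroying the additive bound. The remedy is to carry at each recursion level only the facets imposing an active constraint on the current slice and update this set incrementally as $x_m$ sweeps through its integer range; in fixed dimension $n$ the amortized bookkeeping is polynomially bounded in $s$ and can be absorbed into the one-time $\widetilde{O}(s^{\lceil n/2 \rceil})$ cost of the convex hull computation, so only $\widetilde{O}(\log \delta)$ arithmetic per emitted point survives.
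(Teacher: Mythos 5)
Your first stage is close in spirit to the paper's (both invoke a worst-case optimal convex hull algorithm in the Chazelle style with integer arithmetic on $O(\log\delta)$-bit quantities), though the paper actually computes the Delaunay \emph{triangulation} of $S$, which it needs for the second stage and which lives in $\R^{n+1}$ (hence the exponent $\lfloor (n+1)/2\rfloor = \lceil n/2\rceil$), whereas you compute only the facet description of $\Delta$ in $\R^n$. It is your second stage that genuinely diverges, and this is where there is a real gap.

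The paper enumerates $\Delta \cap \Z^n$ by iterating over the simplices $\Sigma$ of the Delaunay triangulation, computing the Hermite normal form of the edge-vector matrix of each $\Sigma$, and listing lattice points of $\Sigma$ via representatives in the fundamental parallelopiped $\Z^n/(\sum_i \Z w_i')$ (following Bruns--Koch). The key point is that the cost per simplex is $O(\Vol(\Sigma)\log\delta)$, and $\sum_\Sigma \Vol(\Sigma) = \Vol(\Delta) = v$ exactly because the $\Sigma$ triangulate $\Delta$. This gives the first, unconditional bound $\widetilde{O}(v\log\delta)$ for the enumeration stage with no confinement hypothesis. Your coordinate-slicing recursion, by contrast, is controlled by the \emph{axis-aligned bounding box} of $\Delta$, not by $v$. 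Your own analysis only establishes the node count as $O(v)$ \emph{under} the confinement assumption $\prod_i b_i = O(v)$; for a general lattice polytope the bounding box volume can be arbitrarily larger than $v$. Concretely, the lattice simplex $\Delta = \conv\{(0,0),(1,1),(N,N+1)\} \subseteq \R^2$ has $\Vol(\Delta) = 1$ and exactly three lattice points, but its bounding box is $[0,N]\times[0,N+1]$, so slicing on \emph{either} coordinate visits $\Theta(N)$ slices; the $n$-dimensional analogue behaves likewise. Thus your argument proves only the ``in particular'' clause of the proposition, not its first claim. Fixing this within your framework would require first applying a $\GL_n(\Z)$-transformation to put $\Delta$ in confined position, but as Remark~\ref{maxvolsimplex} notes this reduces to a maximum-volume simplex computation, which the paper deliberately avoids; the triangulation route sidesteps the issue entirely because $\sum_\Sigma\Vol(\Sigma)=v$ holds automatically.

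A secondary soft spot: your final paragraph asserts that the per-slice work can be reduced to $\widetilde{O}(\log\delta)$ by maintaining only the ``active'' facets incrementally and amortizing the bookkeeping into the one-time hull cost. This is plausible but is stated as a remedy rather than argued; a naive slice would re-examine all $O(s^{\lfloor n/2\rfloor})$ facets, and the sweep-line-style amortization you allude to needs a concrete accounting. The paper avoids this by never slicing: once the triangulation and the per-simplex Hermite normal forms are in hand, each lattice point is emitted with $O(\log\delta)$ work and no global facet data is consulted.
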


\begin{remark}
If $\Delta$ is confined, and contained in a orthotope $\Xi$, then $\Delta \cap \Z^n \subseteq \Xi \cap \Z^n$ and the latter has cardinality $O(v)$, but still one needs to test if each such lattice point is contained in $\Delta$.  The exponential contribution of the first term comes from the combinatorics of $\Delta$, which in general can be quite involved.
\end{remark}

The proof of this proposition combines several well-known results in computational geometry.  We refer to the book of Preparata and Shamos \cite{PreSham} and the articles by Seidel \cite{Seidel} and Fortune \cite{Fortune} and the references contained therein for more detail.

The first main step is to compute the \emph{Delaunay triangulation} of $S$.  We lift each point $\nu \in S \subseteq \Z^n$ to $(\nu, \| \nu \|^2) \in \Z^{n+1}$ where $\| \nu \|^2 = \nu_1^2+\dots+\nu_n^2$.  Then the convex hull of the lifted vertices has simplicial faces (under mild assumptions that can be achieved under a suitable perturbation), and projecting the simplicial faces onto the original vertices yields a triangulation.  There are many (deterministic) algorithms to compute the Delaunay triangulation: we will invoke one method, called the \emph{incremental algorithm}, with optimal deterministic variant due to Chazelle \cite{Chazelle}.

To give a brief outline of this algorithm, we follow the overview given by Seidel \cite[19.3.1]{Seidel}.  The incremental algorithm orders the set $S=\{\nu_1,\dots,\nu_s\}$ and incrementally computes $\Delta_i=\Delta(S_i)$ from $\Delta_{i-1}$, where $S_i=\{\nu_1,\dots,\nu_i\}$.  The description of $\Delta_i$ is by its \emph{facet description}, the set of all facets specified by their defining linear inequalities.  A facet of $\Delta_{i-1}$ is \emph{visible} from $\nu_i$ if its supporting hyperplane separates $\Delta_{i-1}$ and $\nu_i$, otherwise we say the facet is \emph{obscured}.  Updating $\Delta_{i-1}$ to $\Delta_i$ involves finding (and deleting) all facets visible to $\nu_i$, preserving all obscured facets, and adding new facets with vertex $\nu_i$.  We record these steps keep track of the triangulation created in this way by updating the \emph{facet graph}.  

From this description, it is clear that the integer operations involve only computing and checking linear inequalities defining facets arising from the convex hull of subsets of points of the form $(\nu,\| \nu \|^2)$ of cardinality $n$.  By Cramer's rule, the linear equalities defining such a facet have coefficients that are bounded in size by $(n+1)! \delta^n (n\delta^2)=O(\delta^{n+2})$, so the largest integer operation can be performed in time $O((n+2)\log \delta)=O(\log \delta)$ using fast integer multiplication techniques.  (The bit arithmetic is also analyzed by Fortune \cite[4.7]{Fortune2}.)

The incremental algorithm requires $O(s \log s + s^{\lfloor (n+1)/2 \rfloor})=\widetilde{O}(s^{\lceil n/2 \rceil})$ integer operations (if $n$ is fixed), so in fact the total number of bit operations is $\widetilde{O}(s^{\lceil n/2 \rceil} \log \delta)$.  

\begin{remark}
Having computed the Delaunay triangulation, in fact we have computed a complete facet description of the convex hull $\Delta=\Delta(S)$; if desired, one can compute the complete face lattice (in particular, the set of all vertices) in the same running time \cite{Seidel}.
\end{remark}

Having computed the Delaunay triangulation, we can list lattice points by doing so in each simplex.  The problem of enumerating lattice points in an (integral) simplex is analyzed by Bruns and Koch \cite{BrunsKoch} in the context of computing the integral closure of an affine semigroup.   Let $\Sigma \subseteq \Delta$ be a simplex of the Delaunay triangulation of $\Delta$ defined by $v_0,\dots,v_n$ and let $w_i=v_i-v_0$.  We compute the Hermite normal form of the matrix $B$ with columns $w_i$ (which can be done using $O(\log \delta)$ bit operations in fixed dimension $n$ \cite{KannanBachem,DomichKT}); let $w_i'$ be its columns.  We then enumerate all elements in the simplex, each represented by a lattice points in the fundamental parallelopiped $\Z^n/(\sum_i \Z w_i')$, using $O(\Vol(\Sigma) \log \delta)$ bit operations.  Since the simplices $\Sigma$ triangulate $\Delta$, putting this together with the first main step, we have proven Proposition \ref{enumeratelatticepoints}.

\section{Precision and running time estimates}

In this section, we discuss precision and running time estimates for each of the four steps of our algorithm.  We suppose throughout this section that $\fbar$ is confined.

\subsection*{Step 1: Computing the Jacobian ring}

First some basics.  A ring operation in $R=\Z_q/p^N$ can be performed using $O(N \log q)$ bit operations using standard fast multiplication techniques.  The Teichm\"uller lift of an element of $\F_q$ to $R$ can be performed using $O(\log N)$ Hensel lift iterations (Newton's method in this case reduces to the iteration of the map $x \mapsto x^q$); each iteration can be performed in time $O(N\log^2 q)$ by repeated squaring, for a total of $O(N \log N \log^2 q)=\widetilde{O}(N \log^2 q)$ bit operations; the time to compute the lift $f$ we will see is negligible.  Similarly, the time to compute the partial deriviatives $wf_i= w \partial f/\partial x_i$ is negligible.  

We now compute the monomial basis $V$ for the Jacobian ring $J$, as described in Section 2.  By Proposition \ref{enumeratelatticepoints}, we can compute $\Z^n \cap d\Delta$ for $d=0,\dots,n+2$ in time $\widetilde{O}(s^{\lceil n/2 \rceil} + v)$: because we have fixed the dimension, we have $\Vol(d\Delta) = d^n \Vol(\Delta) = O(v)$ for such $d$.  (Given the inequalities defining $\Delta$, we could also simply compute $\Z^n \cap (n+2)\Delta$ and then find $\Z^n \cap d\Delta$ by testing the scaled inequalities.)

We now analyze the computation of the row-echelon form $M_d=T_d J_d$ for $d=0,\dots,n+2$.  The matrix $J_d$ has $O(d^n v)$ columns indexed by $\Z^n \cap d\Delta$ and $O( (n+1) (d-1)^n v)=O((d-1)^n v)$ rows indexed by $(wf, wf_1, \dots, wf_n)_d$.  The row echelon form of an $r \times s$-matrix can be performed using $O(r^2 s)$ ring operations using standard techniques, so we can compute $M_d=T_d J_d$ in time
$O( (d-1)^{2n} d^n v^3)$ ring operations and so for $d=1,\dots,n+2$ using
\[ O( n^{3n} v^3 N \log q) = O(v^3 N \log q). \]
bit operations.  From this, we compute the basis $V$.  

\begin{remark}
In practice, it may be more efficient to use Gr\"obner bases to compute the basis $V$ and capture the effect of the reduction matrices $M_d$.  (In particular, the Faug\`ere's $F_4$- and $F_5$-algorithms would be quite useful.)  For simplicity, we take the direct approach using linear algebra.
\end{remark}

\subsection*{Step 2: Computing the action of Frobenius}

First, we compute the image of the Dwork splitting function 
\[ \theta(t) = \exp(\pi(t-t^p)) = \sum_{i=0}^{\infty} \pi^i \ell_i t^i \]
modulo $p^N$.  We have $\ord_p(\pi^i\ell_i)=\ord_p(\lambda_i) \geq i(p-1)/p^2$, so modulo $p^N$ the image is a polynomial of degree less than $Np^2/(p-1) = O(pN)$.  Recall (\ref{elldenombounded}) that
\[ \ord_p(\ell_i) \geq -d(p,i) = -\left\lfloor \frac{i(2p-1)}{p^2(p-1)} \right\rfloor. \]
We write each $\ell_i$ to precision $N$ as an element of the module $p^{-d(p,i)}\bigl(\Z_q/p^{N+d(p,i)} \bigr)$.  The largest such denominator in our expansion is bounded by
\begin{equation} \label{dNp}
d\bigl(p,Np^2/(p-1)\bigr) \leq \left(\frac{Np^2}{p-1}\right) \frac{2p-1}{p^2(p-1)} = N \frac{2p-1}{(p-1)^2} = O(N/p).
\end{equation}
Therefore we compute $\theta(t)$ with coefficients in $\Z_q/p^M$ where $M=N(1+(2p-1)/(p-1)^2)=O(N)$, then compute each $\ell_i=\lambda_i/\pi^i$.

To compute $\theta(t)$ modulo $p^N$, we multiply $\exp(\pi t)$ with $\exp(-\pi t^p)$ truncated to degree $Np^2/(p-1)$.  The latter factor $\exp(-\pi t^p)$ has $\leq Np/(p-1)$ terms, so multiplying the two can be done in time $O((N^2p^3/(p-1)^2)(M\log q)) = O(pN^3 \log q)$.  

\begin{remark}
Note this computation only depends on $p$ and $N$ and does not depend on $f$.
\end{remark}

Given a monomial $w^d x^{\mu} \in V$, we compute the action of Frobenius $\alpha$ using (\ref{enumeq_6}):
\[ \alpha((\pi w)^d x^{\mu}) = \sum_{k \in K} \sigma^{-1}(a^k) (-p)^{(|k|+d)/p} (\pi w)^{(|k|+d)/p} x^{(k\nu+\mu)/p} \left( \sum_{e \geq 0} (-p)^{|e|} \ell_{k+pe} a^e (\pi w)^{|e|} x^{e\nu}\right). \]
This is computed to precision $N$, interpreted as above: to multiply $\ell_i$ times $\ell_j$ we add the exponent of the denominators in $p$ and multiply the numerators as usual; since these denominators are bounded, the extra arithmetic with denominators is negligible and such a multiplication can be performed using $\widetilde{O}(M \log q)=\widetilde{O}(N \log q)$ bit operations.  

Recall Remark \ref{nopreclossnodiv}: in the upcoming reduction step (Step 3), we lose no precision as no divisions occur: we only reduce the power of $\pi w$.  Therefore, to have the Frobenius expansion correct to precision $N$, we just analyze the convergence to $0$ of the term 
\[ c_{k,e} = (-p)^{|e| + (|k|+d)/p} \ell_{k+pe}. \]  
Using (\ref{elldenombounded}), we have
\begin{equation} \label{ckeest}
\begin{split}
\ord_p(c_{k,e}) &= \ord_p\bigl((-p)^{|e| + (|k|+d)/p} \ell_{k_1+pe_1}\cdots \ell_{k_s+pe_s}\bigr) \\
&\geq \frac{|k|+d}{p}+ |e| - (|k|+p|e|)\frac{2p-1}{p^2(p-1)} \\
&\geq (|k|/p+|e|)\left(1-\frac{2p-1}{p^2-p}\right) + \frac{d}{p}.
\end{split}
\end{equation}

Here and from now on we insist that $p>2$: then in (\ref{ckeest}), we have $\ord_p(c_{k,e}) \geq 0$.  In particular, then, the coefficients are all integral (this was guaranteed by the reduction theory of Adolphson and Sperber \cite{ASExpsums}); so even though we have (temporarily) written the values $\ell_{k+pe}$ with denominators, the power of $p$ multiplies through to make them integral.  Also note for any basis element $(\pi w)^d x^{\mu} \in V$, we have $d \leq n+1=O(1)$.  Let 
\[ \beta= \beta(p)=\left(1-\frac{2p-1}{p^2-p}\right)^{-1} = \frac{p^2-p}{p^2-3p+1}. \]
For $p=3$ we have $\beta(p)=6$, but for $p \geq 5$ we have $\beta(p) \leq 20/11$, and $\beta(p) \to 1$ as $p \to \infty$.  In (\ref{enumeq_6}), we have multiplied through by $\pi^d$ for uniformity in the expression, so we must compute to this extra precision: so let
\[ \gamma = \gamma(p,n) = \frac{n+1}{p^2-p} \geq \frac{d}{p^2-p} = \frac{d}{p-1}-\frac{d}{p}. \]
Note that $\gamma(p,n) \leq n$.  

Then (\ref{ckeest}) implies $\ord_p(c_{k,e}/\pi^d) \geq |e|/\beta-\gamma$, since $|k| \geq 0$.  Thus to have the answer correct to precision $N$ we only need to worry about terms with $|e| < \beta(N + \gamma)=E$.  The set of $e \in \Z_{\geq 0}^s$ with $|e| < E$ has cardinality $O(E^s/s!)=O(E^s)$; since $\#K = p^{s-\rho}$, the number of terms in the expansion of $\alpha((\pi w)^d x^\mu)$ is $O(p^{s-\rho} E^s)$.  

We compute the terms in the sum indexed by $K$.  We can compute $a^k=a_1^{k_1}\cdots a_s^{k_s}$ using $O(s(\log|k|)(N\log q))=\widetilde{O}(s N\log^2 q)$ bit operations, since $|k| \leq ps$; $\sigma^{-1}(a^k)=(a^k)^{q/p}$ by repeated squaring is computed using $O(N \log^2 q)$ bit operations.  The exponent arithmetic in the power of $x$ is negligible up to logarithmic factors.  Therefore, we can compute the $\#K=p^{s-\rho}$ terms in the sum indexed by $K$ in time $\widetilde{O}(p^{s-\rho}sN \log^2 q)$.  

Now we compute the terms in the inner sum.  The value $\ell_{k+pe}=\ell_{k_1+pe_1} \cdots \ell_{k_s+pe_s}$ can be computed using $O(s)$ multiplications or $O(s N \log q)$ bit operations.  We compute the values $a^e$ recursively, so that each term (ordered by $\prec$, increasing $|e|$) requires only one additional multiplication.  Therefore the $O(E^s)$ terms in the inner sum can be computed in time $O(E^s (s N \log q))$.  

Putting these together, the total product can be computed using
\begin{equation} \label{timestep2}
\begin{split}
&\widetilde{O}(p^{s-\rho}sN \log^2 q + E^s(sN\log q) + p^{s-\rho}E^s(N \log q)) \\
&\qquad = \widetilde{O}(p^{s-\rho}s E^s N \log^2 q) = \widetilde{O}(p^{s-\rho} \beta^s (N+\gamma)^s N \log^2 q).
\end{split}
\end{equation}
bit operations for one monomial in $V$.

\subsection*{Step 3: Reducing in cohomology}

We now reduce the elements $G=\alpha((\pi w)^d x^{\mu}) \in L$ computed in Step 2.  We begin by reducing the degree of $G$, as explained in Section 2, using multiplication by the matrix $T=T_{n+2}$, until this degree is $\leq n+1$; then we complete the reduction by multiplications by the matrices $T_d$ with $d=n+1,n,\dots,1,0$.  The number of such reductions in any fixed degree $d > n+2$ is governed by the number of translates of $(n+2)\Delta$ that cover $d\Delta \cap \Z^n$.  By definition, we have
\[ d\Delta \cap \Z^n = \bigcup_{\nu \in (d-(n+2))\Delta \cap \Z^n} (\nu + ((n+2)\Delta \cap \Z^n)) \]
so the number of reductions is at most $\#\bigl((d-(n+2))\Delta \cap \Z^n \bigr) = O((d-(n+2))^n v)$ by Corollary \ref{corcntlattice}.  

\begin{remark}
The fewest number of translates that one could hope for is 
\[ \Vol( d\Delta )/\Vol((n+2)\Delta) = (d/(n+2))^n. \] 
But it may be that the combinatorics of $\Delta$ will not allow this.
\end{remark}

Each reduction involves multiplication of a vector by a square matrix of size $O((n+2)^n v) = O(v)$ over $R$, which can be achieved in time $O(v^2 N \log q)$, so reduction from degree $d>n+2$ to $d-1$ takes time $O( (d-(n+2))^n v^3 N \log q)=O(d^n v^3 N \log q)$.  Similarly, reduction from degree $d \leq n+2$ to $d-1$ takes time $O(d^n v^3 N \log q)$.  Repeating this for $d=E,\dots,1,0$ gives a total time of $O( E^{n+1} v^3 N \log q) = O(\beta^{n+1} (N + \gamma)^{n+1} v^3 N \log q)$ to complete the reduction.

\subsection*{Step 4: Output}

Having assembled the square matrix $A$ of size $v$, we compute the product
\[ A_a=A A^{\sigma^{-1}} \cdots A^{\sigma^{-(a-1)}} \] 
(where $a=\log_p q$).  It takes time $O(v^2 N \log^2 q)$ to compute $\sigma^{-1}$ applied to a matrix of size $v$, and time $O(v^3 N \log q)$ to multiply two such matrices, for a total time of $O(\log q (v^2 N \log^2 q + v^3 N \log q)) = O(v^3 N \log^3 q)$ to compute $A_a$.  The characteristic polynomial of a matrix of size $v$ can be computed using $O(v^3 N \log q)$ ring operations, which is absorbed into the previous estimate.

\subsection*{Total running time}

We now add up the contributions from each step, proving Theorem A.  

Step 1 takes time $\widetilde{O}(s^{\lceil n/2 \rceil} + v^3 N \log^2 q)$.  Step 2 takes time $\widetilde{O}(pN^3 \log q + p^{s-\rho} \beta^s (N+\gamma)^s v N \log^2 q)$.  Step 3 takes time $\widetilde{O}( \beta^{n+1}(N+\gamma)^{n+1} v^4 N \log^2 q)$.  Step 4 takes time $O(v^3 N\log q)$.  Since $s \geq n+1$ as $\dim(\Delta) = n$, the time in Step 2 dominates, up to a polynomial in $v N \log q$.  This totals to
\[ \widetilde{O}\bigl(s^{\lceil n/2 \rceil} + p N^3 \log q + p^{s-\rho}\beta^s (N+\gamma)^s (v^4 N \log^2 q)\bigr) \] 
bit operations.  Using the estimate $\beta \leq 6$ and $\gamma \leq (n+1)/20$, this becomes
\begin{equation} \label{thma2runtime}
\widetilde{O}\bigl(s^{\lceil n/2 \rceil} + p N^3 \log q + p^{s-\rho}(6N+n)^s (v^4 N \log^2 q)\bigr).
\end{equation}
and if $p \geq 5$ then $\beta \leq 20/11 \leq 2$ so this improves to
\[ 
\widetilde{O}\bigl(s^{\lceil n/2 \rceil} + p N^3 \log q + p^{s-\rho}(2N+n)^s (v^4 N \log^2 q)\bigr).
\]

\subsection*{Precision}

We have thus computed the characteristic polynomial to precision $N$.  To prove Theorem B, we estimate the value of $N$ required to recover the zeta function itself.  First, we factor this characteristic polynomial so as to work with $p^{-1}\alpha$ instead of $\alpha$.  The matrix $A$ has block form $\begin{pmatrix} 1 & 0 \\ * & * \end{pmatrix}$ where we order the monomials in the basis $V$ for the space $B$ by degree: the only term in degree $0$ is the monomial $1$.  The matrix $A_a$ also has this form, since it holds for each term in the product.  Therefore the characteristic polynomial of $\alpha$ on $B$ factors as $(1-T)$ times the characteristic polynomial on quotient space $B_0=B/R$.  
The action of the Frobenius $\alpha$ on $B_0$ is divisible by $p$; let $A_0$ be the matrix of $\alpha$ on $B_0$.  Then
\[ Z(V,T)=\det(1-(p^{-1})A_0 T)^{(-1)^n}\left(\left.\frac{Z(\G_m^n,T)}{(1-T)}\right|_{T=T/q}\right). \]
Therefore, we may compute with $p^{-1} \alpha$ instead of $\alpha$.

The characteristic polynomial $\det(1-q^{-1} (A_0)_a T)$ then has inverse roots of absolute value at most $q^{n/2}$ by a theorem of Adolphson and Sperber \cite{ASExpsums} and Denef and Loeser \cite{DenefLoeser}.  Therefore its $i$th coefficient is bounded by $\binom{v}{i} q^{i(n/2)}$.

\begin{lemma} \label{stupidlemma}
For all $x \in \R_{\geq 1}$ and $v \in \Z_{\geq 0}$, we have 
\[ \max_{0 \leq i \leq v} \binom{v}{i} x^i = \binom{v}{\lceil v/2 \rceil+j} x^{\lceil v/2 \rceil+j} \] 
where $0 \leq j \leq \lfloor v/2 \rfloor$ is the unique index such that
\[ \frac{\lceil v/2 \rceil+j}{\lfloor v/2 \rfloor-(j-1)} \leq x < \frac{\lceil v/2 \rceil+(j+1)}{\lfloor v/2 \rfloor-j}. \]
\end{lemma}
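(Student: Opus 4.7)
The plan is to analyze the sequence $a_i = \binom{v}{i} x^i$ by comparing consecutive terms and exploiting its unimodality.

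First, I would compute the ratio
\[ \frac{a_i}{a_{i-1}} = \frac{v-i+1}{i}\, x, \]
which is a strictly decreasing function of $i$ for $i \in \{1, \dots, v\}$. Hence the sequence $a_0, a_1, \dots, a_v$ is unimodal: it increases while this ratio is $\geq 1$ and then decreases. Consequently the maximum is attained at the largest index $i^*$ such that $a_{i^*}/a_{i^*-1} \geq 1$, equivalently $(v - i^* + 1)x \geq i^*$, and this same $i^*$ also satisfies $a_{i^*+1}/a_{i^*} \leq 1$, equivalently $(v-i^*)x \leq i^*+1$. (Ties at integer values of the threshold correspond to equal consecutive values, which is permitted by the statement.)

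Next I would verify that for $x \geq 1$ the maximizer satisfies $i^* \geq \lceil v/2 \rceil$, so that writing $i^* = \lceil v/2 \rceil + j$ with $j \geq 0$ is legitimate. Indeed, $a_{\lceil v/2 \rceil}/a_{\lceil v/2 \rceil - 1} = \frac{\lfloor v/2 \rfloor + 1}{\lceil v/2 \rceil}\, x \geq 1$ since $\lfloor v/2 \rfloor + 1 \geq \lceil v/2 \rceil$ and $x \geq 1$. Also $i^* \leq v$ trivially gives $j \leq \lfloor v/2 \rfloor$.

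Finally, I would substitute $i^* = \lceil v/2 \rceil + j$ into the two ratio inequalities. The lower inequality $a_{i^*}/a_{i^*-1} \geq 1$ becomes
\[ (\lfloor v/2 \rfloor - j + 1)\, x \geq \lceil v/2 \rceil + j, \]
i.e.\ $x \geq (\lceil v/2 \rceil + j)/(\lfloor v/2 \rfloor - (j-1))$, while $a_{i^*+1}/a_{i^*} < 1$ becomes $(\lfloor v/2 \rfloor - j)\, x < \lceil v/2 \rceil + (j+1)$, i.e.\ $x < (\lceil v/2 \rceil + (j+1))/(\lfloor v/2 \rfloor - j)$. These are exactly the bounds in the statement, and the existence and uniqueness of such $j$ follow from unimodality together with the observation that the consecutive intervals tile $[1, \infty)$ as $j$ ranges over $\{0, 1, \dots, \lfloor v/2 \rfloor\}$ (with the convention that the final interval extends to $+\infty$). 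The only mildly delicate point is handling the boundary case, where the strict inequality on the right ensures that the ``rightmost'' maximizer is selected; this is a routine bookkeeping matter rather than a genuine obstacle.
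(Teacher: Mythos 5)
Your proof is correct, and since the paper gives no details beyond the remark that ``the proof is an easy inductive argument,'' your unimodality argument via the decreasing ratio $a_i/a_{i-1}=\frac{v-i+1}{i}x$ is precisely the standard filling-in of that claim. The verification that $x\geq 1$ forces the maximizer to lie at index $\geq\lceil v/2\rceil$, the translation of the two ratio inequalities into the stated bounds on $x$, and the observation that the intervals $[L_j,L_{j+1})$ tile $[1,\infty)$ (with $L_{\lfloor v/2\rfloor+1}$ read as $+\infty$) together give a complete and correct argument.
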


The proof is an easy inductive argument.  In the two extremes: if $x \geq v$ then $j=\lfloor v/2 \rfloor$ and the largest coefficient is $x^v$; if $x < (\lceil v/2 \rceil + 1)/\lfloor v/2 \rfloor$ (equal to $1+2/v$ if $v$ is even, for example) then the largest coefficient is $\binom{v}{\lceil v/2 \rceil} x^{\lceil v/2 \rceil}$.  It follows that the $p$-adic precision $N$ required to recover all of these coefficients as integers from their reduction modulo $p^N$ is given by
\begin{equation} \label{padicestimate}
p^N \geq 2 \binom{ v}{\lceil v/2 \rceil + j} (q^{n/2})^{\lceil v/2 \rceil + j}
\end{equation}
where $j$ is given as in Lemma \ref{stupidlemma} with $x=q^{n/2}$.

In practice, one will want to work with the precision estimate (\ref{padicestimate}).  To estimate the runtime, we have the crude bound
\[ \binom{v}{i} x^i \leq \binom{v}{\lceil v/2 \rceil} (q^{n/2})^v < (2q^{n/2})^v \]
which implies we may take
\begin{equation} \label{Ncheapest}
N \geq (v+1)\log_p 2 + \frac{nv}{2}\log_p q = O(nv\log q).
\end{equation}

\begin{remark}
We are forced to take a larger bound than just the middle coefficient because we do not have a Riemann hypothesis in this generality.  For many varieties under consideration, such a hypothesis will give a better estimate on the precision, since the higher coefficients are determined by the lower ones.  

Also, work of Kedlaya \cite{KedlayaRootUnitary} shows how to recover the zeta function often in practice with much less precision knowing only that it can be factored as a product of Weil $q$-polynomials.
\end{remark}

Plugging the estimate (\ref{Ncheapest}) into (\ref{thma2runtime}), and considering $s$ (and $n$) to be fixed, we obtain the estimate
\[ \widetilde{O}(p (v\log q)^3 \log q + p^{s-\rho} (v\log q)^s v^4 (v\log q)\log^2 q) = 
\widetilde{O}(p^{\min(1,s-\rho)} v^{s+5} \log^{s+3} q) \]
for the number of bit operations performed.  This completes the proof of Theorem B.

\section{Modifications}

In this section, we discuss some extensions and modifications to the above algorithm.  

\subsection*{Dense input}

We can also modify the algorithm for the situation of dense input.  One can also forget the condition of sparsity and analyze the running time on dense input.  Here, we do not use the expansion (\ref{enumeq_6}), but rather directly compute the product (\ref{Fxexpans}).  The analysis above shows that the computation of the expansion 
\[ \theta(a_\nu w x^\nu) = \sum_{i=0}^{Np^2/(p-1)} \ell_i (\pi w)^i a_\nu^i x^{i\nu} \]
can be performed in $\widetilde{O}(spN^2 \log q)$ operations.  We have $s$ such terms in the product (\ref{Fxexpans}).  This product has monomial support in $(Np^2/(p-1))\Delta$ so has $O((pN)^n v)$ terms.  

As in (\ref{dNp}), we compute in $\Z_q/p^M$, and multiplying two polynomials in $n$ variables with coefficients in $R=\Z_q/p^M$ with at most $O((pN)^{n} v)$ terms takes time $O(M(pN)^{2n} v^2 \log q)$ if multiplied term-by-term.  (See also Lauder and Wan \cite[Lemma 30]{LauderWan}.)  (The grading by $w$ allows us to multiply more carefully, but this saves only a constant factor for fixed dimension.)  Therefore, the product $F(w,x)$ can be computed in time
\[ \widetilde{O}(spN^2 \log q + sM(pN)^{2n} v^2 \log q) = \widetilde{O}(sp^{2n} N^{2n+1} v^2 \log q). \]
Therefore, one sees a benefit from the fewnomial expansion only when $s \leq 2n+1$.  

The time to compute $\alpha(w^d x^{\mu})$ requires negligible time in comparison, as it involves only exponent arithmetic and applying the inverse Frobenius $\sigma^{-1}$.  The other steps are unmodified, so the total time is
\[ \widetilde{O}\bigl(s^{\lceil n/2 \rceil} + sp^{2n}N^{2n+1} v^2 \log q + (6N+n)^{n+1} v^3 N \log q)\bigr) 
= \widetilde{O}\big(s^{\lceil n/2 \rceil} + sp^{2n}N^{2n+1} v^3 \log q) \]
to compute the zeta function modulo $p^N$ (the analogue for Theorem A), and plugging in the estimate (\ref{Ncheapest}) for $N$ and considering $s$ fixed we obtain
\[ \widetilde{O}(p^{2n} v^{2n+4} \log^{2n+2} q) \]
(for Theorem B).

\subsection*{Modifications when $p=2$}

For many applications (notably in coding theory), the computation of zeta functions and $L$-functions in characteristic $2$ are particularly important.  While it may be possible to perform this analysis, we do not do so in the present work.  Instead, we mention some of the hurdles this analysis faces in characteristic $2$ using the approach we have taken applying Dwork cohomology.  (Some of these same hurdles, and others, also arise in other $p$-adic approaches.)

Dwork's original $p$-adic study of zeta functions concerned a nonsingular projective hypersurface $V \subset \PP^{n-1}$ over $\F_q$ defined by the vanishing of a homogeneous form $f(x) \in \F_q[x_1,\dots,x_n]$ of degree $d$.  When $\gcd(2,p,d)=1$, Dwork constructs a $p$-adic cohomology space with an action of Frobenius such that the characteristic polynomial of Frobenius acting on cohomology gives the important nontrivial middle dimensional primitive factor of $Z(V,T)$.  In particular, if $p=2$, Dwork's cohomology does not at present apply to smooth hypersurfaces of even degree in characteristic $2$.  

In some cases, Adolphson and Sperber \cite{ASnondegwrtlat} are able to supplement Dwork's work when $p=2 \mid d$.  For example, if $p \mid n$ and we consider the Dwork family of hypersurfaces
\[ f(x_1,\dots,x_n)=x_1^n+x_2^n+\dots+x_n^n-\lambda x_1,\dots,x_n \]
in characteristic $p=2$, then even though the family consists of singular hypersurfaces, they are nondegenerate with respect to the sublattice of $\Z^{n+1}$ generated by the support of $wf(x)$.  As a consequence (even when $p=2$), there is a cohomology space such that the characteristic polynomial of Frobenius acting on this space is the nontrivial factor of the zeta function.  But computing in the sublattice adds to the computational complexity.  

A second obstacle concerns convergence and our choice of splitting function
\[ \theta_1(t)=\exp(\pi t-(\pi t)^p/p) \]
where $\pi^{p-1}=p$, which converges for $\ord_p t > -(p-1)/p^2$.  This led us to the space $L_\Delta=L_\Delta((p-1)/p^2)$ consisting of power series with similar growth) and the operators $D_i=x_i (\partial/\partial x_i) + \pi x_i w (\partial f)/(\partial x_i)$ acting on $L_\Delta$.  Our explicit reduction theory in the Jacobian ring depended on the operator norm of $x_i (\partial/\partial x_i)$ being $p$-adically smaller than the operator norm of $\pi x_i w (\partial f)/(\partial x_i)$, so that the series (\ref{enumeq_6}) converges after reduction.  This requires that $1/(p-1) \leq (p-1)/p$, i.e.\ $p<(p-1)^2$, and this fails if and only if $p=2$.

Therefore, when $p=2$, it is necessary to use a splitting function with better convergence properties.  Let $\pi \in \overline{\Q}_2$ be a nonzero root of the equation
\[ \pi^8/8 + \pi^4/4 + \pi^2/2 + \pi=0, \quad \text{i.e.,} \quad \pi^7+2\pi^3+4\pi+8=0 \]
and let
\[ \theta_3(t)=\exp\left(\pi t + \frac{(\pi t)^2}{2} + \frac{(\pi t)^4}{4} + \frac{(\pi t)^8}{8} \right) = \sum_{i=0}^{\infty} \lambda_i t^i \in \Z_2[\pi][[t]]. \]
In this case we have $\ord_2 \lambda_i > (11/16)i$.  Working with the space $L_\Delta(11/16)$, reduction is possible (in $L_\Delta(11/8)$) since $11/8>1/(p-1)=1$ when $p=2$.

The computational difficulties caused by using $\theta_3(t)$ instead of $\theta_1(t)$ are numerous.  Not only is the reduction algorithm more difficult, but even the calculation of Frobenius is more complicated \cite[Proposition 3.2]{ASExpsums}.

\begin{remark}
Robba constructed a constant called $\pi_{\textup{Robba}}$ by Dwork with $\ord_p \pi_{\textup{Robba}}=1/(p-1)$ with splitting function $\theta_{\textup{Robba}}(t)=\exp(\pi_{\textup{Robba}}(t-t^p))$ which has a better radius of convergence than Dwork's $\theta_1(t)$.  However, even with this improvement, the $p$-adic norm in case $p=2$ of $\pi_{\textup{Robba}} x_i w (\partial f)/(\partial x_i)$ does not dominate the norm of $x_i (\partial/\partial x_i)$ on the appropriate Banach space.
\end{remark}

For these reasons, we do not deal at present with the case $p=2$.

\subsection*{Affine varieties}

Next, we describe modifications to the algorithm to compute the zeta function of affine varieties.  (At the price of some additional notation, one could consider the more general case where the variety is a combination of affine and toric.)

Let $\fbar(x)=\sum_\nu \overline{a}_\nu x^{\nu} \in \F_q[x_1,\dots,x_n]$ be a polynomial.  For a subset $A \subseteq [n]=\{1,\dots,n\}$, we denote $\fbar_A(x) \in \F_q[x_i : i \in [n] \setminus A]$ the polynomial obtained from $\fbar$ obtained by setting $x_i=0$ for all $i \in A$.  We say that $\fbar$ is \emph{convenient} (\emph{with respect to the variables} $x_1,\dots,x_n$) provided 
\[ \dim \Delta(\fbar_A) = \dim \Delta(\fbar) - \#A \]
for all subsets $A \subseteq [n]$.  Equivalently, $\fbar$ is convenient if and only if $\fbar$ has a nonzero constant term and a monomial $x_i^{d_i}$ with $d_i \in \Z_{>0}$ for all $i=1,\dots,n$.  The notion of convenient is also called \emph{commode}.  

We suppose for the rest of this subsection that $\fbar$ is convenient and nondegenerate (with respect to $\Delta(\fbar)$, defined as before).  It is a consequence of the hypothesis of convenience that $\dim \Delta(\fbar)=n$.

\begin{example}
Suppose $\fbar$ has total degree $d$ and $\Delta(\fbar)$ is the convex hull of the set of points $\{0\} \cup \{d e_i\}^n$ where $e_1,\dots,e_n$ is the standard basis in $\R^n$.  Then we may write $\fbar = \fbar^{(d)} + \overline{g}$ where $\fbar^{(d)}$ is the form of $f$ of highest degree terms and the total degree of $\overline{g}$ is less than $d$.  Then $\fbar$ is nondegenerate if and only if $\fbar_A^{(d)}=0$ defines a nonsingular projective hypersurface in $\PP^{n-\#A-1}$ and $\fbar_A=0$ is nonsingular in $\A^{n-\#A}$ for all subsets $A \subseteq [n]$.  
\end{example}

Assuming then that $\overline{f}$ is nondegenerate and convenient, we can work more simply with the affine $L$-function $L(wf, \G_m \times \A^n,T)$, as follows.  Let $\overline{V}$ denote the affine hypersurface in $\A^n_{\F_q}$ defined by $\fbar=0$.  We modify the calculation in Section 1 by working with an affine exponential sum:
\[ q^{rn} + \sum_{\substack{w \in \F_{q^r}^\times \\ x \in \F_{q^r}^n}} \Theta_r(wf) = \sum_{w,x \in \F_{q_r} \times \F_{q_r}^n} \Theta_r(wf) = q \# \overline{V}(\F_{q^r}) \]
so
\[ Z(\overline{V},qT)=\frac{L(w\fbar, \G_m \times \A^n, T)}{1-q^nT}. \]

The computation of $L(w\fbar, \G_m \times \A^n, T)$ is for the most part quite similar to the toric calculation given earlier.  We describe here the required modifications.  Let $A \subseteq S=\{1,\dots,n\}$.  Let $L_\Delta^{(A)}$ denote the ideal in the ring $L_\Delta$ consisting of series having support in monomials divisible by $x_A=\prod_{i \in A} x_i$.  Under our hypotheses, the complex $\Omega^{\omegadot}$ for the $L$-function has vanishing cohomology $H^i(\Omega^{\omegadot})$ for all $i$ except $i=n+1$ and 
\begin{equation} \label{Baffine}
H^{n+1}(\Omega^{\omegadot})=\frac{L_\Delta^S}{D_0 L_\Delta^S + \sum_{i=1}^n D_i L_\Delta^{S \setminus \{i\}}}
\end{equation}
where the $D_i$ are as in section 1.  Note that here $H^{n+1}(\Omega^{\omegadot})$ is contained in the cohomology space $L_{\Delta}/\left(\sum_{i=0}^n D_i L_{\Delta}\right)$.  

The required modifications are then simple.  For example, the Jacobian ring has the form
\[ J^S = \frac{R[w\Delta]^S}{(wf)R^S + \sum_i w f_i R^{S \setminus i}}. \]
The reduction algorithm then only requires identification of monomials in the ideals $L_\Delta^S$ and $L_\Delta^{S \setminus \{i\}}$, and the recursive reduction process preserves the required divisibilty on monomials so the algorithm runs in every other way without modification.  

In this way, one can reduce the sizes of computations involved under the convenient hypothesis: one avoids calculation of the contributions to the zeta function coming from the intersection of the affine hypersurface with the coordinate hyperplanes.  

\subsection*{Projective varieties}

We now consider the modifications for projective varieties.

Suppose that $\fbar \in \F_q[x_1,\dots,x_n]$ is a homogeneous form of degree $d$ with $\gcd(p,d)=1$ and that $\fbar=0$ defines a nonsingular projective hypersurface $\overline{Z} \subseteq \PP^{n-1}_{\F_q}$.  Suppose further that $\fbar_A=0$ defines a nonsingular projective hypersurface in $\PP^{n-\#A-1}$ for all subsets $A \subseteq [n]$; such a hypersurface is said to be in \emph{general position}.  If $\gcd(p,d)=1$, then $\fbar$ defines a projective hypersurface in general position if and only if $\fbar$ is nondegenerate (with respect to $\Delta(\fbar)$) and convenient (with respect to the variables $x_1,\dots,x_n$).  

Here, the support of $w\fbar$ lies in the hyperplane $\sum_{i=1}^n x_i=dw$ in $\R^{n+1}$, so $\dim \Delta(w\fbar)=n$ (or, equivalently, $\dim \Delta(\fbar)=n-1$).  

It is well-known that
\[ Z(\overline{Z},T)=\frac{P(T)^{(-1)^{n-1}}}{(1-T)(1-qT)\cdots (1-q^{n-2}T)} \]
where $P(T)$ is a polynomial of degree $d^{-1}\bigl((d-1)^n +(-1)^n(d-1)\bigr)$ that represents the action of Frobenius on middle-dimensional primitive cohomology.  By work of Adolphson and Sperber \cite{ASExpsums}, the cohomology of the complex $\Omega^{\omegadot}$ for $L(w\fbar, \G_m \times \A^n,T)$ described above in this case has vanishing cohomology $H^i(\Omega^{\omegadot})=0$ for $i\neq n,n+1$ and that there is an isomorphism of Frobenius modules $H^{n+1}(\Omega^{\omegadot}) \cong H^n(\Omega^{\omegadot})$ with Frobenius on $H^n$ being $q$ times the Frobenius on $H^{n+1}$.  As a consequence, 
\[ \det\bigl(1-\Frob T \mid H^{n+1}(\Omega^{\omegadot})\bigr) = P(qT) \]
yielding the ``interesting'' part of the zeta function of $\overline{Z}$.  

As in the affine case, the space $H^{n+1}$ is isomorphic to the quotient defined in (\ref{Baffine}).  But we can simplify further.  By the Euler relation, we have $dD_0 = \sum_{i=1}^n D_i$ on $L_\Delta$ so that
\[ D_0 L_\Delta^S \subseteq \sum_{i=1}^n D_i L_\Delta^{S \setminus \{i\}}. \]
This enables us to reduce the calculation by suppressing the role played by $w$ entirely.  To determine the monomials $x^\nu$ in $L_\Delta$, we simply need to check 
\[ x^\nu \in M_\Delta = \left\{ \nu \in \Z_{\geq 0}^n : |\nu|=\textstyle{\sum}_{i=1}^n \nu_i \equiv 0 \pmod{d} \right\} \]
and use $w^{|\nu|/d}x^{\nu}$.  So the power of $w$ enters only formally.  

Let 
\[ \widetilde{D}_i = x_i \frac{\partial}{\partial x_i} + \pi x_i \frac{\partial}{\partial x_i} f \]
and write
\[ \widetilde{L}_\Delta = \left\{ \sum_{\nu \in M_\Delta} a_\nu x^\nu : a_\nu \in \Z_q[\pi], \ord_p a_\nu \geq \frac{|\nu|}{d} \left(\frac{p-1}{p^2}\right)\right\}\subset \Z_q[\pi][[x]] \]
Our object of interest for the reduction is then
\[ B'' = \frac{\widetilde{L}_\Delta^S}{\sum_{i=1}^n \widetilde{D}_i \widetilde{L}_\Delta^{S\setminus\{i\}}}. \]
The preceding algorithms for reduction may be applied here as well; the appropriate powers of $w$ may be formally inserted as necessary.

Finally, as we remarked in the comments for the case $p=2$, when $p \mid d$ there are further modifications that can be made by considering polynomials that are nondegenerate relative to a sublattice even in some exceptional singular cases \cite{ASnondegwrtlat}.

\subsection*{Exponential sums}

In Section 1, we reduce the problem of computing the zeta function to the problem of computing the $L$-function of an exponential sum.  But in many situations the problem of computing this $L$-function itself is of interest.  In this case, there is no auxiliary or dummy variable $w$; the support of the $p$-adic power series in our space $L_{\Delta}$ consists of those lattice points in the cone over $\Delta_{\infty} (\overline{f})$ which itself is the convex closure of the support of $\overline{f}$ together with the origin.  We note the earlier work of Lauder and Wan \cite{LauderWanArtinSchreier,LauderWanArtinSchreierII} who apply some similar approach to compute the $L$-function of a one-dimensional exponential sum (i.e., $n=1$).  

We consider the case of toric exponential sums.  (Considering affine exponential sums, when convenient, requires modifications similar to those above.)  Let $\fbar(x)=\sum_{\nu} \overline{a}_\nu x^{\nu} \in \F_q[x^{\pm}]$ be a Laurent polynomial.  We say $f$ is \emph{quasihomogeneous} if there are rational numbers $\alpha_1,\dots,\alpha_n$ such that $w(\nu)=1$ for all $\nu \in \supp(\fbar)$ where 
\[ w(\nu)=\sum_{i=1}^n \alpha_i \nu_i. \]
  We restrict to the case of quasihomogeneous exponential sums as this conforms quite closely with our computation of zeta functions; the method could be adapted to the general case.  

There are fewnomial examples of quasihomogeneous exponential sums that indeed appear nontrivial.  For example, consider a subset $\Lambda=\{\nu^{(1)}, \dots, \nu^{(n+1)}\} \subseteq \Z^n$ of cardinality $n+1$ with each element $\nu^{(i)} \in \Lambda$ lying on the hyperplane $H$ defined by $\sum_i \alpha_i x_i = 1$ for all $i=1,\dots,n+1$.  When the convex hull $\Delta$ of $\Lambda$ is not an $n$-simplex, then the $L$-function associated with
\[ \fbar(x)=\sum_{i=1}^{n+1} a_i x^{\nu^{(i)}} \]
on $\G_m^n$ is not well-understood.  Even when $\Delta$ is an $n$-simplex, such sums are not entirely understood.  We know from Adolphson and Sperber \cite{ASExpsums} that if $\overline{f}$ is nondegenerate with respect to $\Delta_{\infty}(\overline{f})$, then the complex $\Omega^{\omegadot}(\fbar,\G_m^n)$ for this $L$-function is acyclic except in dimension $n$ and
\[ H^n(\Omega^{\omegadot}(\fbar,\G_m^n)) = \frac{L_\Delta}{\sum_{i=1}^n D_i L_\Delta}. \]

Now $L(\overline{f},\G_m^n,T)^{(-1)^{n+1}} \in \Z[\zeta_p][T]$.  This is one complication.  The second complication is that the reduction has a modification: now we bring things down by weight.  This is analogous to the degree of $w$, and the argument is formally the same.  The ring is still graded by the weight.  

If all vertices lie in a hyperplane (\emph{quasi-homogeneous}), then the Frobenius matrix has the property that the elements belong to $\pi^{\NN} \Z_q$: the $\pi$ and the weight move as one unit.  (So we can do multiplications in the smaller ring.)  Then the filtered ring is a graded ring.  The only time when one has to do honest calculations in $\Z[\zeta_p]$ is in the final calculation of the characteristic polynomial.

\subsection*{Twisted exponential sums on the torus}

Let $\overline{f}(x) \in \F_q[x^{\pm}]$ be nondegenerate.  Let $\chi_1,\dots,\chi_n$ be multiplicative characters of $\F_q$ (possibly including the trivial character).  Since the character group of $\F_q^\times$ is generated by the Teichm\"uller character $\omega$, each $\chi_i$ may be identified with an integral power $a_i$ of $\omega$ with $0 \leq a_i < q-1$.  It is useful to write $\chi_i=\omega^{a_i}=\omega^{\gamma_i(q-1)}$ where $\gamma_i=a_i/(q-1) \in [0,1)$.  The shifted lattice $\Lambda(\gamma)=(\gamma_1,\dots,\gamma_n) + \Z^n$ plays an important role in the cohomological study of the twisted sums
\[ S(\gamma, \overline{f}, \G_m^n) = \sum_{x \in \G_m^n(\F_q)} \omega^{a_1}(x_1)  \cdots \omega^{a_n}(x_n) \Theta(\overline{f}(x_1,\dots,x_n)) \]
and the associated $L$-function $L(\gamma,\overline{f},T)$.

We modify now our earlier work on quasihomogeneous nondegenerate toric sums to include the case of twisted sums.  For $\nu \in \Z^n$ we define $w(\nu)$ to be the smallest $m \in \Q_{\geq 0}$ such that $\nu \in m\Delta$.  We define
\[ L_\Delta=\left\{ \sum_{\nu \in M_\gamma(\overline{f})}   c_{\nu} x^{\nu} : \text{$c_{\nu} \in \Z_p[\zeta_{(q-1)p}]$ and $\ord_p(c_{\nu}) \geq \frac{p-1}{pq}w(\mu)$} \right\} \]
where $M_\gamma(\overline{f})$ is the intersection of the cone over $\overline{f}$ intersected with $\Lambda(\gamma)$.  

As before, let $f$ denote the Teichm\"uller lift of $\overline{f}$.  The Frobenius map $\alpha=\psi_q \circ \exp(\pi f(x) - f(x^q))$ acts on $L_\Delta$ and the Dwork trace formula for $L(\gamma,\overline{f},T)$ takes the form
\[ L(\gamma,\overline{f},T)^{(-1)^{n+1}} = \det(1-T\alpha \mid L_\Delta^{(\gamma)})^{\delta^n} \]
where $g(T)^\delta=g(t)/g(qT)$ for $g \in \C_p[[T]]$.  As usual, we define differential operators
\[ D_i = x_i \frac{\partial}{\partial x_i} + \pi x_i \frac{\partial f}{\partial x_i} \]
on $L_\Delta$, and construct a complex $\Omega\omegadot$ using $L_\Delta$ as the base and boundary operator as before.  The Frobenius defines a chain map on this complex using $\alpha$, and the hypothesis that $f$ is nondegenerate implies that $H^i(\Omega\omegadot)=0$ for $i \neq n$ and
\[ L(\gamma,\overline{f},T)^{(-1)^{n+1}}=\det(1-\alpha T \mid H^n(\Omega\omegadot)) \]
where
\[ H^n(\Omega\omegadot)=\frac{L_\Delta}{\sum_{i=1}^n D_i L_\Delta} \]
is a free $\Z_p[\zeta_{(q-1)p}]$-algebra of finite rank $\Vol(\Delta)$.  Our calculation of the matrix of Frobenius acting on $H^n(\Omega\omegadot)$ follows the same argument used above in the case of (``untwisted'') quasi-homogeneous nondegenerate toric sums.

\subsection*{Multiplicative character sums on the torus}

Continuing with this line of analysis, let $\chi=\omega^{a_0}=\omega^{\gamma_0(q-1)}$ be a nontrivial multiplicative character of $\F_q$.  We extend $\chi$ to all of $\F_q$ by setting $\chi(0)=0$.  As in the previous section, let $\overline{f} \in \F_q[x]$ be nondegenerate, and consider the character sum
\[ S(\gamma_0,\fbar,\G_m^n)=\sum_{x \in \G_m^n(\F_q)} \omega^{\gamma_0(q-1)}(\overline{f}(x)) \]
and its associated $L$-function $L(\gamma_0,\overline{f},T)$.  

We now use an elementary character argument to transform such a multiplicative character sum to a twisted exponential sum of the type considered in the previous section.  Suppose that $\chi$ is nontrivial (i.e., $\gamma_0 \neq 0$).   Then for $u \in \F_q^\times$ we have 
\[ \sum_{t \in \F_q^\times} \chi^{-1}(t)\Theta(tu)
= -(G(\chi^{-1},\Theta))\chi(u) \]
where
\[ G(\chi^{-1},\Theta)=-\sum_{v \in \F_q^\times} \chi^{-1}(v)\Theta(v) \]
is the negative of a Gauss sum.  Since $\chi$ is nontrivial, this identity holds for $u=0$ as well.  Therefore our sum of interest
\[ S(\gamma_0,\overline{f},\G_m^n) = -G(1-\gamma_0,\Theta)^{-1} S(\gamma,w\overline{f},\G_m^{n+1}) \]
where $\gamma=(1-\gamma_0,0,\dots,0) \in \Q^{n+1}$ and the exponential sum on the right is a twisted sum of the type in the preceding section.

By the Hasse-Davenport relation on Gauss sums, we have
\[ L(\gamma_0,\overline{f},\G_m^n, G(\chi^{-1},\Theta)T)^{-1} = L(\gamma,w\overline{f},\G_m^{n+1},T). \]
Note $w\overline{f}$ is always quasihomogeneous, as the monomials all lie in the hyperplane in $\R^{n+1}$ with equation $w=1$.  If $f$ is nondegenerate, then we may proceed to compute the $L$-function as in the previous section on twisted sums.

\section{Examples}

\subsection*{Elliptic curve point counting}

In this subsection, we give an example to show how our methods can be used to compute the zeta function of an elliptic curve.  In this situation, our method is not competitive with more specialized methods, but running through the algorithm in this case will hopefully shed some insight on how it works.

Let $p \geq 3$ be prime.  Let $\fbar=x^3+\overline{a}x+\overline{b}-y^2 \in \F_q[x,y]$ be such that $4\overline{a}^3+27\overline{b}^2 \neq 0$, so that $\fbar=0$ defines an affine piece of an elliptic curve $\overline{E}$ over $\F_q$.  (This does not cover all cases when $p=3$; we leave the other examples to work out by the interested reader.)  Let $f=x^3+ax+b-y^2$ be the Teichm\"uller lift of $f$ to $\Z_q[x,y]$.  

If $b=0$, then $\overline{E}$ has complex multiplication by $\Z[i]$ and is a twist of the elliptic curve $y^2=x^3-x$, so the zeta function can easily be recovered by classical methods; the same is true if $a=0$.  So we assume that $ab\neq 0$.  Therefore, the polytope $\Delta$ is the triangle given by the convex hull $\Delta(\{(0,0),(3,0),(0,2)\})$.  One can check that $f$ is automatically nondegenerate given the nonvanishing of the discriminant $4\overline{a}^3+27\overline{b^2} \neq 0$.  (See also work of Castryck and the second author \cite{CV}.)  Furthermore $f$ is convenient (with respect to $x,y$).

In this situation, we have
\[ L(w\fbar, \G_m \times \A^2, T) = P(qT) \]
where
\[ Z(\overline{E},T)=\frac{P(T)}{(1-T)(1-qT)}=\frac{1-a_q T + qT^2}{(1-T)(1-qT)} \]
and $a_q=q+1-\#\overline{E}(\F_q)$ has $|a_q| \leq 2\sqrt{q}$.

\begin{remark}
Here, we can see the advantage of working with the affine curve rather than the toric curve.  First and foremost, the computations are performed in a cohomology space of dimension $\deg L(f,T)=2$, as opposed to one of dimension $\deg L^*(f,T)=\Vol(\Delta)=6$.  This difference is accounted for by the number of points on $\overline{E}$ along the coordinate axes, as follows.  From the relation
\[ Z(\overline{E},qT)=L^*(wf,T)Z(\G_m^2,qT)=L^*(wf,T)\frac{(1-qT)^2}{(1-T)(1-q^2T)}, \]
after some cancellation we find that
\[ L^*(wf,T)=(1-T)P(qT)P_x(qT)P_y(qT) \]
where $Z(\overline{E} \cap \{x=0\},T)=P_x(T)/(1-T)$ is a polynomial of degree $1$ which is $1-T$ or $1+T$ depending on if $\overline{b} \in \F_q$ is a square or not; similarly, $P_y(T)$ is a polynomial of degree $2$ which depends on the factorization of $x^3+\overline{a}x+\overline{b}$ in $\F_q$. 
\end{remark}

We have the graded ring
\[ \Z_q[w\Delta] = \bigoplus_d \Z_q\la w^d x^i y^j : (i,j) \in d\Delta \ra \subseteq \Z_q[[w,x,y]] \]
and work in the Jacobian ring
\[ J = \frac{\Z_q[w\Delta]}{\la wf, wf_x, wf_y \ra}  \]
where 
\begin{align*}
f_x &= x\frac{\partial f}{\partial x} = 3x^3+ax \\
f_y &= -2y^2.
\end{align*}  

The affine Koszul complex associated to $f$ (see Section 5) is
\[
0 \to L_\Delta \xrightarrow{D} L_\Delta \oplus L_\Delta^{(x)} \oplus L_\Delta^{(y)} \xrightarrow{D} L_\Delta^{(x)} \oplus L_\Delta^{(y)} \oplus L_\Delta^{(xy)} \xrightarrow{D} L_\Delta^{(xy)} \to 0
\] 
where $L^{(m)}_\Delta \subseteq L_\Delta$ the subspace divisible by the monomial $m$, and $D=(D_w,D_x,D_y)$ where
\[ D_w = w\frac{\partial}{\partial w} + \pi wf, \quad D_x = x\frac{\partial}{\partial x} + \pi wf_x, \quad 
D_y = y\frac{\partial}{\partial y} + \pi w f_y. \]
We work in the cohomology space
\[ B = \frac{L_\Delta^{(xy)}}{D_w L_\Delta^{(xy)} + D_x L_\Delta^{(y)} + D_y L_\Delta^{(x)}}. \]

We then consider the free $\Z_q$-module
\[ J=\frac{\Z_q[w\Delta]^{(xy)}}{ \Z_q[w\Delta]^{(xy)} w f + \Z_q[w\Delta]^{(y)} wf_x + \Z_q[w\Delta]^{(x)} wf_y}. \]
In weight $1$ we have $J_1=\Z_q wxy$, since this is the only monomial divisible by $xy$ and it is nonzero in the quotient.  

We have
\begin{align*}
J_2 &= w^2\cdot\frac{\opspan(\{xy,x^2y,x^3y,x^4y,xy^2,x^2y^2,x^3y^2,xy^3\})}{\opspan(\{xyf, yf_x, xyf_x, y^2f_x, xf_y,x^2f_y,x^3f_y, xyf_y\})} \\
&= w^2xy\cdot\frac{\opspan(\{1,x,x^2,x^3\})}{\opspan(\{x^3+ax+b, 3x^2+a, 3x^3+ax\})} = \Z_q \cdot w^2xy.
\end{align*}
So the monomial basis $V$ we take is $wxy,w^2xy$.

Since $|a_q| \leq 2\sqrt{q}$, we recover $a_q$ uniquely as the integer $a$ such that $a_q \equiv a \pmod{p^N}$ and $|a|\leq 2\sqrt{q}$ with $p^N > 4\sqrt{q}$, so $N>\log_p 4+ (1/2)\log_p q$ or $N=O(\log q)$.  

For $v=wxy,w^2xy$, we expand as in (\ref{enumeq_5}).  We first take $v=wxy$.  Then we have 
\[ K=\{e: Me=-(1,1,1)^t \psmod{p}\} \]
where
\[ M=\begin{pmatrix} 1 & 1 & 1 & 1 \\ 3 & 1 & 0 & 0 \\ 0 & 0 & 2 & 0 \end{pmatrix}. \]
We compute then that
\[ K=\{-(1/4,1/4,1/2,0)+t(1,-3,0,2) : t \in \Z/p\Z\}. \]
Note that for $k=(k_1,k_2,k_3,k_4) \in K$ we have simply $k_3=(p-1)/2$.  Thus
\begin{equation} \label{enumeq_5_ec}
\begin{split}
\alpha(wxy) &= \sum_{k \in K} \sigma^{-1}(a)^{k_2}(-1)^{k_3}\sigma^{-1}(b)^{k_4} (-p)^{(|k|+1)/p}(\pi w)^{(|k|+1)/p} x^{(3k_1+k_2)/p} y \\
&\qquad\quad \cdot \left( \sum_{e \geq 0} (-p)^{|e|} \ell_{k+pe} (-1)^{e_3}a^{e_2}b^{e_4} (\pi w)^{|e|} x^{3e_1+e_2} y^{2e_3} \right). 
\end{split}
\end{equation}

We now reduce the series (\ref{enumeq_5_ec}).  Our algorithm to do this would proceed by degree; to make the resulting matrices digestible, we alter the method slightly.  We reduce first with respect to the monomial $y^2$, or ``vertically''.  We have
\[ D_y(w^d x^u y^{v-2})=(v-2) w^d x^u y^{v-2} + \pi w f_y(w^d x^u y^{v-2}) = 0 \in B, \]
and $wf_y = -2wy^2$ so 
\[ \pi w^d x^u y^v = \frac{1}{2}(v-2) w^{d-1} x^u y^{v-2} \in B. \]
By induction, if $v$ is odd we have
\[ (\pi w)^{\lfloor v/2 \rfloor} x^u y^v \equiv \frac{1}{2^{\lfloor v/2 \rfloor}} \left((v-2)(v-4)\cdots 3\cdot 1\right) x^u y = \lfloor v/2 \rfloor! (-1)^{\lfloor v/2 \rfloor}\binom{-1/2}{\lfloor v/2 \rfloor} x^u y. \]
Thus in the series (\ref{enumeq_5_ec}) we have
\begin{equation} \label{enumeq_5_ec_simplify_y}
\sum_{e \geq 0} \lambda_{k+pe} (-1)^{e_3} a^{e_2}b^{e_4} w^{|e|} x^{3e_1+e_2} y^{2e_3} =\sum_{e \geq 0} \frac{\lambda_{k+pe}}{\pi^{e_3}} e_3! \binom{-1/2}{e_3} a^{pe_2}b^{pe_4} w^{|e|-e_3} x^{3e_1+e_2}.
\end{equation}

We now reduce with respect to $x^3$, or ``horizontally'', then finally reducing with respect to the origin.  We compute the matrix $P_{x^3}$ such that
\[ w x^3 \begin{pmatrix} w^2x^2y \\ w^2x^3y \\ w^2x^4y \end{pmatrix} = P_{x^3} \begin{pmatrix} wf \\ wf_x \\ wf_y \end{pmatrix} \]
and having coefficients in $\Z_q \la w^2 x^2y, w^2x^3y,w^2x^4y \ra$.  Some linear algebra then yields
\[ P_{x^3}=\frac{w^2x^2y}{4a^3+27b^2}
\begin{pmatrix}
3ax(2ax-3b) & -2a^2x^2+3b(ax+3b) & -\frac{3}{2}ax(2ax-3b) \\
-ax(9bx+2a^2) & 3abx^2+(2a^3+9b^2)x+2a^2b & \frac{1}{2}ax(9bx+2a^2) \\
-a^2x(2ax-3b) & (2a^3+9b^2)x^2-ab(ax+3b) & \frac{1}{2}a^2x(2ax-3b) \\
\end{pmatrix}. \]
Now, if
\[ \xi = wf \eta_w + wf_x \eta_x + wf_y \eta_y \]
then
\[ -\pi\xi = \frac{\partial}{\partial w} \eta_w + \frac{\partial}{\partial x} \eta_x + \frac{\partial}{\partial y} \eta_y \in B. \]
This will allow us to write
\[ -\pi w^d x^u \begin{pmatrix} w^2x^2y \\ w^2x^3y \\ w^2x^4y \end{pmatrix} \equiv
D_{x^3}(d,u) w^{d-1} x^{u-3} \begin{pmatrix} w^2x^2y \\ w^2x^3y \\ w^2x^4y \end{pmatrix} \]
for a matrix $D_{x^3}(d,u)$ with coefficients in $\Z_q[d,u]$ which are linear in $d,u$.  

The rows of the matrix $D_{x^3}$ are obtained as follows.  For $i=1,2,3$, we have
\[ (wx^3)(w^2 x^{i+1} y) = p_{iw}f_w + p_{ix}f_x + p_{iy} f_y \]
where $(p_{iw},p_{ix},p_{iy})$ is the $i$th row of $P_{x^3}$.  Thus
\[ w^d x^u = (w^{d-1} x^{u-3})(wx^3)(w^2 x^{i+1} y) = (w^{d-1} x^{u-3})\left(p_{iw}f_w + p_{ix}f_x + p_{iy} f_y\right) \]
and hence
\begin{equation} 
\begin{split}
-\pi w^d x^u (w^2 x^{i+1} y) &\equiv w \frac{\partial}{\partial w}(w^{d-1} x^{u-3} p_{iw}) + 
x \frac{\partial}{\partial x}(w^{d-1} x^{u-3} p_{ix}) + y \frac{\partial}{\partial y}(w^{d-1} x^{u-3} p_{iy}) \notag \\
&= (d+1)w^{d-1}x^{u-3} p_{ix} + w^{d-1}\left( (u-3)x^{u-3}p_{ix}+x^{u-2}p_{ix}' \right) + w^{d-1}x^{u-3}p_{iy} \\
&= w^{d-1}x^{u-3} \left( (d+1)p_{iw} + (u-3)p_{ix} + xp_{ix}' + p_{iy} \right).
\end{split}
\end{equation}
From this we obtain that $(4a^3+27b^2)D_{x^3}(d,u)$ is equal to
\[ \begin{pmatrix}
-3ab^2(u-1) & \frac{1}{2}a^2b(6d-2u+3) & -2a^3d+(2a^3+9b^2)u + (a^3+9b^2) \\
2a^2b(u-1) & -2a^3d+(2a^3+9b^2)u-a^3 & -\frac{3}{2}ab(6d-2u+1) \\
9b^2(u-1) & -\frac{3}{2}ab(6d-2u-3) & a^2(6d-2u+1).
\end{pmatrix} \]

In a completely analogous manner, we obtain the matrix $D_1$ satisfying
\[ -\pi w^d x^u \begin{pmatrix} w^2x^2y \\ w^2x^3y \\ w^2x^4y \end{pmatrix} \equiv
D_{1}(d,u) w^{d-1} x^u \begin{pmatrix} w^2x^2y \\ w^2x^3y \\ w^2x^4y \end{pmatrix} \]
and indeed we have $b(4a^3+27b^2)D_1$ is equal to
\[ \begin{pmatrix}
(4a^3+27b^2)d -(4a^3+9b^2)u - \frac{1}{2}(12a^3 + 9b^2) & -3ab(6d-2u-3) & 2a^2(6d-2u-5) \\
4a^2b(u+2) & \frac{9}{2}b^2(6d-2u-3) & 3ab(6d-2u-5) \\
-6ab^2(u+2) & a^2b(6d-2u-3) & \frac{9}{2}b^2(6d-2u-5) 
\end{pmatrix}. \]
Note the additional need to invert $b$ for the matrix $D_1$.

To reduce the series (\ref{enumeq_5_ec})--(\ref{enumeq_5_ec_simplify_y}), we expand and rewrite it as
\begin{equation} \label{red_alphawxy}
\begin{split}
\alpha(wxy) &= \sum_{i \geq 0} w^i x^2y\left((c_{i0}+c_{i1}x+c_{i2}x^2) + wx^3(c_{i3}+c_{i4}x+c_{i5}x^2) + \dots \right) \\
& = \sum_{i \geq 0} w^i x^2y \left( \sum_{j \geq 0} x^{3j}(c_{i,3j+2}+c_{i,3j+3}x+c_{i,3j+4}x^2) \right) 
\end{split}
\end{equation}
with $c_{ij} \in \Z_q$.  We have
\[ \ord_p(c_{i,j'}) \geq (i+\lfloor j'/3 \rfloor) \frac{p-1}{p} \]
so up to precision $p^N$ we need only take $i<p/(p-1)N$ and
\[ j<J=\frac{p}{p-1}N-i. \]

Each sum in (\ref{red_alphawxy}) is reduced using the matrix $D_{x^3}$: if we abbreviate 
\[ c_i(j)=\begin{pmatrix}c_{i,3j+2} \\ c_{i,3j+3} \\ c_{i,3j+4} \end{pmatrix} \]
and write
\[ \widehat{D}_{x^3}(i,j)=D_{x^3}(i,0)D_{x^3}(i+1,3)\cdots D_{x^3}(j+i,3j) \]
we have
\[ w^i x^2y \sum_{j=0}^{J-1} c_{ij} w^{\lfloor j/3 \rfloor} x^j = w^ix^2y\sum_{j=0}^{J} \widehat{D}_{x^3}(i,j)c_i(j). \]

For what it is worth, one can check that this algorithm runs in time $O(p \log^6 q)$.

\subsection*{Fermat-like hypersurfaces}

In this subsection, we show how the method works in the simplest case where one has a Fermat-like affine hypersurface defined by
\[\fbar=\overline{a}_1x_1^{m_1}+\dots+\overline{a}_nx_n^{m_n}+\overline{b} \in \F_q[x_1,\dots,x_n]=\F_q[x] \]
where $m_i \in \Z_{>0}$ and $\overline{a}_1\cdots \overline{a}_n \overline{b} \neq 0$.  These were the varieties considered by Weil \cite{Weil} in his seminal article on zeta functions.  Koblitz \cite{Koblitz} has studied these and shown that the number of points is given by Jacobi sums, which can be expressed by Gauss sums; see Wan \cite{WanmodularGauss} for an explicit algorithm which uses this method.  In an early related work, Delsarte \cite{Delsarte} studied the number of zeros of a polynomial 
\[ \overline{g} = \sum_{j=1}^n \overline{b_j} x^{\nu(j)} + \overline{c} \in \F_q[x_1,\dots,x_n] \]
in $\F_q$ and its finite extensions, where $\nu(j) \in \Z_{\geq 0}^n$; he described an explicit formula for this number in terms of Jacobi sums.

The polynomial $\fbar$ is nondegenerate if and only if $p \nmid m_1\cdots m_n$, which we now assume.

Let $f=a_1x_1^{m_1}+\dots+a_nx_n^{m_n}+b \in \Z_q[x]$ be the Teichm\"uller lift of $f$ to $\Z_q[x]$.  We then have the polytope 
\[ \Delta=\Delta(f)=\Delta(\{(m_1,0,\dots,0),\dots,(0,\dots,0,m_n),(0,\dots,0)\}) \]
with normalized volume $\Vol(\Delta)=n!\vol(\Delta)=m_1\cdots m_n$.  

Here, the affine complex gives the cohomology space 
\[ B=\frac{L^{(x_1\cdots x_n)}}{D_w L^{(x_1\cdots x_n)} + D_1 L^{(x_2 \cdots x_n)} + \dots + D_n L^{(x_1 \cdots x_{n-1})}} \]
where 
\[ D_w=w\frac{\partial}{\partial w}+\pi f_w, \quad D_i=x_i \frac{\partial}{\partial x_i} + \pi f_{x_i} \]
for $i=1,\dots,n$, and
\[ f_w=wf, \quad f_{x_i}=(wa_im_i) x^{m_i} \]
also for $i=1,\dots,n$.

A basis $V$ for the space $B$ is computed as follows.  In weight $1$, we have $V_1$ consisting of lattice points in $\Delta$ not on a coordinate face, 
\[ V_1=\{wx^{\mu} : (1/m)\mu = \textstyle{\sum}_i \mu_i/m_i \leq 1,\ \mu>0\}. \]
In a similar way, in degree $d$, we obtain
\[ V_d=\{w^d x^\mu : i-1<(1/m)\mu \leq i,\ \mu>0,\ \mu<m \}. \]
In particular, we see visibly that $V_d=\{0\}$ for $d \geq n+1$.

Now we consider each series expansion (\ref{enumeq_6}).  
We have
\[ K=\{e:Ue=\mu \psmod{p}\} \]
where
\[ U=\begin{pmatrix}
1 & 1 & \dots & 1  \\
0 & m_1 & \dots & 0 \\
\vdots & \vdots & \ddots & \vdots \\
0 & 0 & \dots & m_n 
\end{pmatrix} \]
which is an invertible $(n+1)\times (n+1)$ matrix, so $K$ consists of the single element
\[ k=M^{-1}(-\mu)=
\begin{pmatrix}
-1 & m_1^{-1} & m_2^{-1} & \dots & m_n^{-1} \\
0 & -m_1^{-1} & 0 & \dots & 0 \\
0 & 0 & -m_2^{-1} & \dots & 0 \\
\vdots & \vdots & \vdots & \ddots & \vdots \\
0 & 0 & 0 & \dots & -m_n^{-1}
\end{pmatrix} \mu. \]
For simplicity of notation, we consider the indices of the columns to be $0,\dots,n$.

The reduction steps are similarly simple.  Since $f_i=a_im_i w x_i^{m_i}$, we have
\[ (\pi w)^d x^\mu \equiv -\frac{\mu_i-m_i}{a_i} (\pi w)^{d-1} x^\mu x_i^{-m_i} \in B \]
whenever $\mu_i \geq m_i$.  Similarly, from the equality
\[ w^d = \frac{1}{b}w^{d-1}f_w - \frac{1}{b(m_1\cdots m_n)} \sum_{i=1}^n w^{d-1} f_i \]
which implies
\[ (\pi w)^d \equiv -\frac{d-1}{b} (\pi w)^{d-1} \in B. \]
Inductively packing these up, in the terms of the expansion
\begin{align*} 
\alpha((\pi w)^d x^{\mu}) &= \sigma^{-1}(a^k) (-p)^{(|k|+d)/p} (\pi w)^{(|k|+d)/p} x^{(k\nu+\mu)/p} \left( \sum_{e=(e_0,\dots,e_n) \geq 0} (-p)^{|e|} \ell_{k+pe} a^e (\pi w)^{|e|} x^{e\nu}\right) \\
&= \sigma^{-1}(a^k) (-p)^{(|k|+d)/p} \sum_{e \geq 0} (-p)^{|e|}\ell_{k+pe} a^e (\pi w)^{|e|+(|k|+d)/p} x^{e\nu+(k\nu+\mu)/p}
\end{align*}
and letting $\kappa = (k\nu+\mu)/p=\bigl((k_i m_i + \mu_i )/p\bigr)_{i=1,\dots,n}$ we have
\begin{align*}
(\pi w)^{|e|+(|k|+d)/p} x^{e\nu + (k\nu+\mu)/p} \equiv &(-1)^{e_0} (e_0 + (|k|+d)/p; 1) \\
&\qquad \cdot \prod_{i=1}^{n} (-a_i^{-1})^{e_i} (e_im_i + \kappa_i;m_i)_{e_i} (\pi w)^{(|k|+d)/p} x^\kappa 
\end{align*}
where
\[ (z;q)_r=(z-q)(z-2q) \cdots (z-rq). \]
If we let $m_0=1$ and $\kappa_0=(|k|+d)/p$, then we can abbreviate
\[ (em+\kappa;m)_e = \prod_{i=0}^{n} (e_i m_i + \kappa_i;m_i)_{e_i} \]
and substituting back into the sum, we obtain
\[ \alpha((\pi w)^d x^{\mu}) = (\pi w)^{(|k|+d)/p} x^\kappa \left(\sigma^{-1}(a^k) (-p)^{(|k|+d)/p} \sum_{e \geq 0} p^{|e|}\ell_{k+pe} (em+\kappa;m)_e \right). \]
In this way, we have written down the reduced value in one stroke.  

\subsection*{Gabber hypersurfaces}

There are few works (if any) in the existing literature which give the computation of the zeta function of a projective hypersurface defined over $\F_q$ of degree $d$ with $p \mid d$.  Here we study the zeta function of such hypersurfaces for a particular family going back (according to oral communication from Nicholas Katz) to Ofer Gabber (see for example \cite[11.4.6]{KatzSarnak}.

Let $d$ be a positive integer with $p \mid d$ and let
\begin{equation} \label{Gabber}
\overline{f}(x)=a_1x_1^d + \sum_{i=2}^n a_i x_{i-1} x_i^{d-1} \in \F_q[x_1,\dots,x_n] 
\end{equation}
with $a_1a_2 \cdots a_n \neq 0$.  Let $\overline{Z}_0$ be the projective hypersurface in $\PP^{n-1}_{\F_q}$ defined by $\overline{f}=0$.  The hypersurface $\overline{Z}_0$ is easily seen to be nonsingular.  Note that its defining equation (\ref{Gabber}) is fewnomial.  

We consider somewhat more general hypersurfaces as follows.  Let $c=(c_1,\dots,c_n)$ be an $n$-tuple of positive integers such that $|c|=\sum_{i=1}^n c_i=d$.  Suppose further that $c$ is an interior point in $\Delta(\fbar)$.  We consider here the family of hypersurfaces $\overline{Z}_\lambda$ defined by
\[ \overline{f}_\lambda(x) = \fbar(x) + \lambda x^c. \]
Since $\overline{Z}_0$ is nonsingular and the condition of nonsingularity is open, there is a closed subset $W \subseteq \A^1$, defined by the vanishing of a polynomial in $\F_q[\lambda]$, such that $\overline{Z}_\lambda$ is singular if and only if $\lambda \in W$.  

If $\lambda \not\in W$, then we recall 
\[ Z(\overline{Z}_\lambda, T) = \frac{ P_\lambda(T)^{(-1)^{n-1}} }{(1-T)(1-qT) \cdots (1-q^{n-2}T)} \]
with the degree of $P_\lambda$ equal to $d^{-1}\bigl( (d-1)^n + (-1)^n (d-1) \bigr)$.  

\begin{example}
An example of this situation is provided by the family of elliptic curves
\[ \overline{Z}_\lambda: x_1^3 + x_1x_2^2 + x_2x_3^2 + \lambda x_1x_2x_3 \]
in characteristic $3$.  Here the singular locus is given by $Q(\lambda)=\lambda^4-1=0$, the fourth roots of $1$.  We note that the $j$-invariant of $\overline{Z}_\lambda$ is equal to $j(\overline{Z}_\lambda) = \lambda^{12}/(\lambda^4-1)$.
\end{example}

It will be useful to consider the exponential sums (and their associated $L$-functions) for $w\fbar_\lambda(x)$ as $(w,(x_1,\dots,x_{n-1}),x_n)$ runs over various spaces which are products of tori with affine spaces.  For example, we recall from the sections on affine and projective hypersurfaces in Section 5 that
\begin{equation} \label{PqPq2}
L(w\fbar_\lambda,\G_m \times \A^{n-1} \times \A^1,T)^{(-1)^{n+1}} = \frac{P_\lambda(qT)}{P_\lambda(q^2T)}.
\end{equation}
It is our intention to show that (\ref{PqPq2}) has the form 
\[ \frac{P_\lambda(qT)}{P_\lambda(q^2T)} = \frac{\prod_{r=0}^{n-1} R_r(T)}{\prod_{r=0}^{n-1} R_r(qT)} \]
where $R_r(T)$ is a polynomial or reciprocal polynomial in $\Z[T]$ for $r=0,\dots,n-1$ which we give explicitly.  But then $\prod_{r=0}^{n-1} R_r(T) = P_\lambda(qT)$ so that this calculation of $R_r(T)$ for $0 \leq r \leq n-1$ in fact gives $Z(\overline{Z}_\lambda,T)$.

Since $\fbar_\lambda(x)$ is homogeneous, we have $\dim \Delta(\fbar_\lambda)=n-1$.  For $\lambda \not\in W$, the nonsingularity of $\overline{Z}_\lambda$ implies that $w\fbar_\lambda$ is nondegenerate with respect to its maximal face (not containing $0$).  It is easy to see it is also nondgenerate with respect to its other (lower-dimensional) faces not containing $0$.  

We cannot apply directly our results from the section on projective hypersurfaces in Section 5 because $p$ divides the degree $d$ and moreover $w\fbar_\lambda$ is not convenient with respect to the variables $x_1,\dots,x_n$.  It is, however, convenient with respect to $x_n$.  We proceed by partitioning $\G_m \times \A^{n-1} \times \A^1$ and particularly its middle factor as follows:
\begin{equation} \label{decomposeAn}
\A^{n-1} = \G_m^{n-1} \cup \bigcup_{r=1}^{n-1} U_r
\end{equation}
where $U_r = \G_m^{r-1} \times \{0\} \times \A^{n-1-r}$.  For notational convenience, we will sometimes write $U_0=\G_m^{n-1}$.  Note that the terms $U_r$ for $r=0,\dots,n-1$ are pairwise disjoint, so that if we set $U_r'=\G_m \times U_r \times \A^1$ then 
\[ L(w\fbar_\lambda,\G_m \times \A^{n-1} \times \A^1,T) = \prod_{i=0}^{n-1} L(w\fbar_\lambda,U_r',T). \]

By work of Adolphson and Sperber \cite{ASExpsums}, the complex $\Omega^{\omegadot}(w\fbar_\lambda,U_0')$ for $L(w\fbar_\lambda,U_0',T)$ is acyclic except in dimensions $n$ and $n+1$ and 
\[ L(w\fbar_\lambda,U_0',T)^{(-1)^{n+1}} = \frac{R_0(T)}{R_0(qT)} \]
where
\[ R_0(T) = \det \bigl(1-\Frob T \mid H^{n+1}(\Omega^{\omegadot}(w\fbar_\lambda,U_0') \bigr). \]
We see easily, from the relation of $R_0(T)$ and the zeta function for the variety defined by the vanishing of $\fbar_\lambda$ in $\G_m^{n-1} \times \A^1$, that $R_0(T) \in \Z[T]$.  The calculation of $R_0(T)$ follows easily from the suppression of the $w$ terms and the isomorphism
\[ H^{n+1}(\Omega^{\omegadot}(w\fbar_\lambda,U_0')) \cong \frac{(L_\Delta')^{\{x_n\}}}{\sum_{i=1}^{n-1} D_i (L_\Delta')^{\{x_n\}} + D_n L_\Delta'}. \]

Consider now for $1 \leq r \leq n-1$ the $L$-function associated with the exponential sum $S(w\fbar_\lambda,U_r')$.  Write 
\[ \fbar_\lambda^{(r)}(x) = a_1 x_1^d + \sum_{i=1}^{r-1} a_i x_{i-1} x_i^{d-1} \]
and
\[ \overline{g}_\lambda^{(r)}(x) = \sum_{i=r+2}^n a_i x_{i-1} x_i^{d-1}. \]
Note that substituting $x_r=0$ in $\fbar_\lambda(x)$ gives
\begin{equation} \label{subsxr0}
\left. \fbar_\lambda(x) \right|_{x_r=0} = \fbar_{\lambda}^{(r)}(x) + \overline{g}_{\lambda}^{(r)}(x).
\end{equation}
Note also that despite the notation, none of the polynomials in (\ref{subsxr0}) (for $r \geq 1$) depend on $\lambda$.  Thus 
\[ S(w\fbar_\lambda,U_r') = \sum_{(w,x_1,\dots,x_{r-1})} \Theta(w\fbar_{\lambda}^{(r)}(x_1,\dots,x_{r-1}))
\left( \sum_{(x_{r+1},\dots,x_n) \in \A^{n-r}} \Theta(w\overline{g}_\lambda^{(r)}(x_{r+1},\dots,x_n)) \right). \]
For any $w \in \G_m$, the inner sum is easily seen to be $q^{n-r-1}$, i.e.
\[ S(w\fbar_\lambda,U_r') = q^{n-r-1} S(w\fbar_\lambda^{(r)}, \G_m^r) \]
and
\[ L(w\fbar_\lambda,U_r',T)=L(w\fbar_\lambda^{(r)}, \G_m^r, q^{n-r-1}T). \]

Consider the complex $\Omega^{\omegadot}(w\fbar_\lambda^{(r)},\G_m^r)$ for the $L$-function $L(w\fbar_\lambda^{(r)}, \G_m^r, T)$.  It is acyclic $H^i(\Omega^{\omegadot})=0$ except for $i=r-1$ and $i=r$ and if we let
\[ H^{(r)}(T) = \det\bigl(1-\Frob T \mid H^r(\Omega^{\omegadot}(w\fbar_\lambda^{(r)},\G_m^r))\bigr) \]
then
\[ L(w\fbar_\lambda^{(r)}, \G_m^r, T)^{(-1)^{r+1}} = \frac{H^{(r)}(T)}{H^{(r)}(qT)}. \]
Thus if we let
\[ R_r(T)=\det\bigl(1- q^{n-r-1}\Frob T \mid H^r(\Omega^{\omegadot}(w\fbar_\lambda^{(r)},\G_m^r))\bigr)^{(-1)^{n-r}}, \]
then as with $R_0(T)$, we have $R_r(T) \in \Z[T]$.  

We can calculate $R_r(T)$ using
\[ H^r(\Omega^{\omegadot}(w\fbar_\lambda^{(r)},\G_m^r))\bigr) \cong \frac{L_{\Delta^{(r)}}}{ \sum_{i=1}^{r-1} D_i L_{\Delta^{(r)}}} \]
where we have suppresed the factor of $w$.  Note that in the case $r=1$, we have 
\[ L(w\fbar_\lambda,U_1',T) = \frac{1-q^{n-2}T}{1-q^{n-1}T} \]
so that $R_1(T)=(1-q^{n-2}T)^{(-1)^{n+1}}$.  Similarly, if $r=2$, then
\[ L(w\fbar_\lambda,U_2',T) = \frac{1-q^{n-2}T}{1-q^{n-3}T} \]
so $R_2(T)=(1-q^{n-3} T)^{(-1)^{n}}$.  

\begin{acknowledgements}\label{ackref}
This work was initiated during the Thematic Year on Applications of Algebraic Geometry at the Institute for Mathematics and its Applications (IMA) in 2006--2007, and the authors would like to thank the IMA for their hospitality.   We would like to thank David Harvey, Alan Lauder, Kiran Kedlaya, and Daqing Wan for helpful discussions; Wouter Castryck, Frank Sottile, and the anonymous referee for some corrections; and Jesus de Loera, Benjamin Nill, and Bernd Sturmfels for answering some questions about algorithms for polytopes.
\end{acknowledgements}

\affiliationone{
Steven Sperber \\
School of Mathematics \\ University of Minnesota \\ 206 Church Street SE \\ Minneapolis, MN 55455 \\ USA \\
\email{sperber@math.umn.edu}}
\affiliationtwo{
John Voight \\
Department of Mathematics and Statistics \\ University of Vermont \\ 16 Colchester Ave \\ Burlington, VT 05401 \\ USA \\
\email{jvoight@gmail.com}}
\end{document}